 \newtheorem{theorem}{Theorem}[section]
 \newtheorem{corollary}[theorem]{Corollary}
 \newtheorem{lemma}[theorem]{Lemma}
 \newtheorem{proposition}[theorem]{Proposition}
 \theoremstyle{definition}
 \newtheorem{definition}[theorem]{Definition}
 \theoremstyle{remark}
 \newtheorem{remark}{Remark}[section]
 \theoremstyle{assumption}
 \newtheorem{assumption}{Assumption}[section]
 \theoremstyle{example}
 \newtheorem{bigthm}{Theorem}
\newtheorem{bigcor}[bigthm]{Corollary}
 \numberwithin{equation}{section}
\begin{document}

\title[Associativity of Gluing]
{On the Associativity of Gluing}

\author{LIZHEN QIN}

\address{Mathematics Department of Wayne State University, Detroit, MI 48202, USA}

\email{dv6624@wayne.edu}




\keywords{Morse theory, negative gradient trajectories, associativity of
gluing, compactified Moduli spaces, manifold with faces}





\begin{abstract}
This paper studies the associativity of gluing of trajectories in Morse
theory. We show that the associativity of gluing follows from
of the existence of compatible manifold with face structures on the compactified moduli
spaces. Using our previous work, we obtain the
associativity of gluing in certain cases.

In particular, associativity holds when the ambient manifold is compact
and the vector field is Morse-Smale.
\end{abstract}
\maketitle

\section{Introduction}
In order to develop his homology theory, Floer invented two techniques in
Morse theory (see e.g. \cite{floer1}). One is the compactification
of the moduli spaces of negative gradient trajectories. The other
one is the gluing of broken trajectories. These two arguments have
continuously impacted Morse theory since then. For example,
moduli spaces have extensive applications in geometry and topology
(see e.g. \cite{franks}, \cite{latour},
\cite{barraud_cornea1}-\cite{burghelea_haller} and
\cite{cohen1}-\cite{cornea5}).

Due to this influence, there is a folklore theorem or rather a
philosophy as follows. Under certain conditions of compactness,
a moduli space of trajectories can be
compactified to be a manifold with corners.
There has been some progress on this topic in the
literature as it was interpreted and proved in certain cases. For
example, see \cite[Proposition 2.11]{latour}, \cite[Theorem
1]{burghelea_haller}, \cite[Appendix]{barraud_cornea2},
\cite[Theorem 3.3]{qin1} and \cite[Theorem 7.5]{qin2}.

Another related problem is the so-called
``associativity of gluing'' that is alluded to in the title.
We first learned of this problem  in the paper of Cohen, Jones and Segal \cite{CJS}.

This paper shows that the associativity of gluing is a direct consequence of
the existence of compatible manifold structures on the compactified moduli
spaces. We will in fact see that there is a general result along these lines
in which Morse theory occurs as a special case.

Suppose $p_{1}$, $p_{2}$ and $p_{3}$ are critical points,
$\gamma_{1}$ is a trajectory from $p_{1}$ to $p_{2}$ and
$\gamma_{2}$ is a trajectory from $p_{2}$ to $p_{3}$. In a strict
sense, the pair $(\gamma_{1}, \gamma_{2})$ of consecutive
trajectories is not a trajectory. We consider $(\gamma_{1},
\gamma_{2})$ as a broken trajectory from $p_{1}$ to $p_{3}$. A
{\it gluing} of $(\gamma_{1}, \gamma_{2})$ is a smooth family
$$\gamma_{1} \#_{\lambda} \gamma_{2}\, ,$$ where $\lambda \in
[0,\epsilon)$ is the gluing parameter,
$\gamma_1 \#_0 \gamma_2 = (\gamma_1,\gamma_2)$ and
$\gamma_{1} \#_{\lambda} \gamma_{2}$ is an unbroken trajectory when $\lambda \ne 0$.

Suppose now that $\gamma_{1}$, $\gamma_{2}$ and $\gamma_{3}$ are three
consecutive trajectories. Then one can form two families according
to the various ways of associating pairs:
$$ (\gamma_{1} \#_{\lambda_{1}} \gamma_{2})
\#_{\lambda_{2}} \gamma_{3} \qquad \text{ and  }\qquad  \gamma_{1} \#_{\lambda_{1}}
(\gamma_{2} \#_{\lambda_{2}} \gamma_{3})\, .$$
If these families coincide, one says that associativity gluing is
satisfied.

The manifold structure of a compactified moduli space is actually
related to the associativity of gluing. One can derive the manifold
structure from the associativity of gluing because the latter provides
nice coordinate charts for the former. However, this is \textit{not}
the only way to get the manifold structure. The papers
\cite{latour}, \cite{burghelea_haller}, \cite{qin1} and \cite{qin2}
do not use any gluing arguments.

In this paper, we shall strengthen the above relationship by working
in the opposite direction: we will show that
the associativity of gluing is a consequence of the existence of
a certain kind of manifold structure.
More precisely, Theorems \ref{theorem_gluing} and
\ref{theorem_tradition_gluing} show that, if the manifold structures
satisfy Assumption \ref{assumption_M_I}, then one will get the
associativity of gluing for free. In fact, we reformulate a gluing of
broken trajectories as parametrizations of collar neighborhoods of
the strata of the compactified moduli spaces. Then associativity of gluing
will be seen to be equivalent to a choice of compatible collar structure.
The above theorems will be generalized to Theorem \ref{theorem_collar}
which is a statement about the compatible collar structures of manifold with
faces.

In short, these theorems convert the problem of the associativity of
gluing to the problem of manifold structures. By the results we proved about
manifold structures in \cite{qin1} and \cite{qin2}, we get
Propositions \ref{proposition_infinite_gluing} and
\ref{proposition_compact_gluing}. They show the associativity of gluing
in  Morse theory in two contexts. An informal restatement of these
results is given by

\begin{bigcor} Suppose $M$ is a compact Riemannian manifold and $f$ is a Morse
function on $M$. Suppose $- \nabla f$
satisfies the Morse-Smale condition.

Then there exists an associative gluing rule.
\end{bigcor}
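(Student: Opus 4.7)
The plan is to derive the corollary directly from Proposition \ref{proposition_compact_gluing}, which is the compact Morse--Smale case of the main reduction of this paper. The work therefore consists of verifying the hypotheses of that proposition, namely the existence of a family of manifold-with-faces structures on the compactified moduli spaces $\overline{\mathcal{M}}(p,q)$ of negative gradient trajectories satisfying Assumption \ref{assumption_M_I}.

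First I would invoke \cite[Theorem 7.5]{qin2} to endow each $\overline{\mathcal{M}}(p,q)$ with a smooth manifold-with-faces structure whose codimension-$k$ stratum is the space of $k$-fold broken trajectories from $p$ to $q$. Since $M$ is compact and $-\nabla f$ is Morse--Smale, every such moduli space is actually compact, so we obtain a manifold with corners in the classical sense, and the boundary stratification matches the natural fiber-product decomposition over intermediate critical points.

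The main step is the verification of Assumption \ref{assumption_M_I}: the structures on different moduli spaces must be mutually compatible along the natural gluing inclusions
\[
\overline{\mathcal{M}}(p_1,p_2) \times \overline{\mathcal{M}}(p_2,p_3) \hookrightarrow \overline{\mathcal{M}}(p_1,p_3),
\]
which should identify the left-hand side with a face of the right-hand side, and similarly for iterated inclusions and higher corners. This is the technical heart of the argument: one must trace through the charts built in \cite{qin2} and check that the transverse coordinates adapted to a given stratum agree, up to smooth change of variables, with the product coordinates inherited from the strata sitting inside it.

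Once Assumption \ref{assumption_M_I} is in hand, Theorem \ref{theorem_gluing} (or, equivalently, its corner-theoretic reformulation in Theorem \ref{theorem_collar}) produces compatible collar structures on the whole system $\{\overline{\mathcal{M}}(p,q)\}$, and the dictionary of this paper between compatible collars and gluing maps then delivers an associative gluing rule. The principal obstacle is not the application of the abstract theorem but the verification of Assumption \ref{assumption_M_I} from the explicit constructions of \cite{qin1,qin2}; once the compatibility across strata is secured, the corollary follows automatically.
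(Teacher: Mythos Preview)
Your proposal is correct and follows essentially the same route as the paper: Corollary~A is presented there as an informal restatement of Proposition~\ref{proposition_compact_gluing}, which in turn is obtained by invoking \cite[Theorem~7.5]{qin2} to verify Assumption~\ref{assumption_M_I} in the compact Morse--Smale case and then applying Theorems~\ref{theorem_gluing} and~\ref{theorem_tradition_gluing}. The only minor point is that once you cite Proposition~\ref{proposition_compact_gluing} the corollary is immediate, so the verification of Assumption~\ref{assumption_M_I} you outline is really the proof of that proposition rather than an additional hypothesis check.
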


\begin{bigcor} Suppose $M$ is a complete Hilbert-Riemannian manifold. Assume $f$
satisfies Condition (C) and has finite indices.
Suppose $- \nabla f$
satisfies the Morse-Smale condition.
Assume that the metric on $M$ is locally trivial (see \cite[Definition
2.16]{qin1}).

Then there exists an associative gluing rule.
\end{bigcor}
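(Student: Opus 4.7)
The strategy is to deduce this corollary from the formal statement Proposition \ref{proposition_infinite_gluing}, which is the precise counterpart of the informal statement above. That proposition, in turn, combines two ingredients: the manifold with faces structure on compactified moduli spaces established in \cite{qin1}, and the general principle (Theorem \ref{theorem_collar}, specialized to Morse theory via Theorems \ref{theorem_gluing} and \ref{theorem_tradition_gluing}) that any compatible collar structure on such manifolds yields an associative gluing rule automatically.

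First, I would verify that the hypotheses present here --- completeness of the Hilbert-Riemannian manifold, Condition (C), finiteness of indices, the Morse-Smale condition, and local triviality of the metric in the sense of \cite[Definition 2.16]{qin1} --- are exactly those needed by the main results of \cite{qin1}. Those results endow each compactified trajectory space $\overline{\mathcal{M}}(p,q)$ with a smooth manifold with faces structure, where the codimension-$k$ stratum consists of broken trajectories with $k$ breaks. The key checkpoint is to confirm that the resulting family of manifold structures is \emph{compatible} in the sense required, i.e.\ that it satisfies Assumption \ref{assumption_M_I}. This amounts to unwinding the construction of \cite{qin1} and matching its charts near corners with the formal framework of the present paper.

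Next, I would apply Theorem \ref{theorem_collar} to this system of manifolds with faces: it produces a coherent choice of collar neighborhoods on all faces. Reinterpreting a collar of the codimension-one face $\overline{\mathcal{M}}(p_1,p_2)\times\overline{\mathcal{M}}(p_2,p_3)$ inside $\overline{\mathcal{M}}(p_1,p_3)$ as a gluing map $(\gamma_1,\gamma_2)\mapsto\gamma_1\#_\lambda\gamma_2$, as prescribed by Theorems \ref{theorem_gluing} and \ref{theorem_tradition_gluing}, converts compatibility of collars into associativity: the two a priori distinct families $(\gamma_1\#_{\lambda_1}\gamma_2)\#_{\lambda_2}\gamma_3$ and $\gamma_1\#_{\lambda_1}(\gamma_2\#_{\lambda_2}\gamma_3)$ both emerge as restrictions of a single collar of the codimension-two corner $\overline{\mathcal{M}}(p_1,p_2)\times\overline{\mathcal{M}}(p_2,p_3)\times\overline{\mathcal{M}}(p_3,p_4)$, and hence coincide.

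The main obstacle, in my view, is not any single ingredient but the bookkeeping required in the first step: verifying that Assumption \ref{assumption_M_I} is literally satisfied by the manifold structures of \cite{qin1}. That paper was written without reference to the present framework, so one must carefully track the behavior of charts across strata, the local product structure near corners, and the smoothness of the transition to the open stratum of unbroken trajectories, in order to match the axioms of Assumption \ref{assumption_M_I} term by term. Once this verification is complete, the conclusion follows immediately from Theorem \ref{theorem_collar} with no further analytic gluing estimates.
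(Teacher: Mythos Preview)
Your proposal is correct and follows essentially the same route as the paper: the corollary is just the informal restatement of Proposition \ref{proposition_infinite_gluing}, which in turn is obtained by citing \cite[Theorem 3.3]{qin1} to verify Assumption \ref{assumption_M_I} and then invoking Theorems \ref{theorem_gluing} and \ref{theorem_tradition_gluing} (hence Theorem \ref{theorem_collar}). The only difference is emphasis: where you flag the verification of Assumption \ref{assumption_M_I} from \cite{qin1} as a potentially delicate bookkeeping step, the paper treats \cite[Theorem 3.3]{qin1} as already stated in a form that yields Assumption \ref{assumption_M_I} directly, so no further matching of charts is needed.
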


A byproduct of our work is Proposition \ref{proposition_collar}
which is also about compatible collar structures. Theorem
\ref{theorem_collar} is about a family of manifolds with faces (see
Assumption \ref{assumption_set}), while Proposition
\ref{proposition_collar} is about a single one. However, the assumption
of Proposition \ref{proposition_collar} is more general.

The outline of this paper is as follows. Section
\ref{section_moduli_spaces} reviews the definition of moduli spaces
of trajectories. Section \ref{section_main_theorems} gives our main
results on the associativity of gluing. Section
\ref{section_generalization} generalizes the theorems in the
previous section. The proof of our main theorem occupies Sections
\ref{section_face_structures} and \ref{section_proof}. We conclude
this paper by presenting the byproduct in Section
\ref{section_a_byproduct}.

\section{Moduli Spaces}\label{section_moduli_spaces}
In this section, we review the definition of the moduli spaces of
trajectories of negative gradient vector fields. (See \cite{schwarz}
or \cite{qin1} for more details.)

Suppose $M$ is a Hilbert-Riemannian manifold and $f$ is a Morse function on
$M$. Let $-\nabla f$ be the negative gradient of $f$.

\begin{definition}\label{definition_invariant_manifold}
Let $\phi_{t}(x)$ be the flow generated by $- \nabla f$ with initial
value $x$. Suppose $p$ is a critical point. Define the descending
manifold of $p$ as $\mathcal{D}(p) = \{ x \in M \mid \displaystyle
\lim_{t \rightarrow - \infty} \phi_{t}(x) = p \}$. Define the
ascending manifold of $p$ as $\mathcal{A}(p) = \{ x \in M \mid
\displaystyle \lim_{t \rightarrow + \infty} \phi_{t}(x) = p \}$.
\end{definition}

Both $\mathcal{D}(p)$ and $\mathcal{A}(p)$ are smoothly embedded
submanifolds in $M$.

\begin{definition}\label{definition_transverality}
If the descending manifold $\mathcal{D}(p)$ and the ascending
manifold $\mathcal{A}(q)$ are transversal for all critical points
$p$ and $q$, then we say $- \nabla f$ satisfies the transversality
or Morse-Smale condition.
\end{definition}

If $- \nabla f$ satisfies transversality, then $\mathcal{D}(p) \cap
\mathcal{A}(q)$ is an embedded submanifold which consists of points
on trajectories (or flow lines) from $p$ to $q$. Since a trajectory
has an $\mathbb{R}$-action, we may take the quotient of
$\mathcal{D}(p) \cap \mathcal{A}(q)$ by this $\mathbb{R}$-action,
i.e. consider its orbit space acted upon by the flow. This leads to
the following definition.

\begin{definition}\label{definition_moduli_space}
Suppose $- \nabla f$ satisfies transversality. Define
$\mathcal{W}(p,q) = \mathcal{D}(p) \cap \mathcal{A}(q)$. Define the
moduli space $\mathcal{M}(p,q)$ as the orbit space
$\mathcal{W}(p,q)/\mathbb{R}$.
\end{definition}

We assume transversality all through this paper. It's well known
that, when $f$ has finite indices, $\mathcal{M}(p,q)$ is a finitely dimensional manifold of
dimension $\text{ind}(p) - \text{ind}(q) - 1$, where $\text{ind}(*)$
is the Morse index of $*$.

\begin{definition}\label{definition_points_partial_order}
Suppose $p$ and $q$ are two critical points. We define the relation
$p \succeq q$ if there is a trajectory from $p$ to $q$. We define
the relation $p \succ q$ if $p \succeq q$ and $p \neq q$.
\end{definition}

The transversality implies that
$``\succeq"$ is a partial order. To guarantee this, it suffices to
show the transitivity of $``\succeq"$. The best proof is probably to
use the $\lambda$-Lemma (see \cite[p.\ 85, Corollary 1]{palis_de}).
It is valid even if $M$ is a Banach manifold and the vector field is
a general one (\textit{not} necessarily a negative gradient) with
hyperbolic singularities. In Floer theory (see e.g. \cite[p.\
529]{floer1}), this can be proved by a gluing argument.

\begin{definition}\label{definition_critical_sequence}
An ordered set $I = \{ r_{0}, r_{1}, \cdots, r_{k+1} \}$ is a
critical sequence if $r_{i}$ ($i=0, \cdots, k+1$) are critical
points and $r_{0} \succ r_{1} \succ \cdots \succ r_{k+1}$. We call
$r_{0}$ the head of $I$, and $r_{k+1}$ the tail of $I$. The length
of $I$ is $|I|=k$.
\end{definition}

Suppose $I = \{ r_{0}, r_{1}, \cdots, r_{k+1} \}$ is a critical
sequence. We define the following product manifold
\begin{equation}\label{definition_M_I}
\mathcal{M}_{I} = \prod_{i=0}^{k} \mathcal{M}(r_{i}, r_{i+1}).
\end{equation}

Each element in $\mathcal{M}_{I}$ stands for a (un)broken trajectory
from $r_{0}$ to $r_{k+1}$ which is broken at exactly the points
$r_{i}$ ($i=1, \cdots, k$).

\section{Main Theorems}\label{section_main_theorems}
In this section, we state our results on the associativity of gluing.

Theorems \ref{theorem_gluing} and \ref{theorem_tradition_gluing}
will be based on the following assumption. For the definitions of
manifold with faces and the $k$-stratum, see Definitions
\ref{definition_manifold_with_face} and \ref{definition_k_stratum}.

\begin{assumption}\label{assumption_M_I}
Suppose $\Omega$ is the set of critical
points of $f$. Assume $\Omega$ is countable. The relation $``\succeq"$
(see Definition \ref{definition_points_partial_order}) defined on $\Omega$ is a
partial order. Suppose $\mathcal{M}(p,q)$ is a finite dimensional
manifold for each $p,q \in \Omega$ such that $p \succ q$ (see
Remark \ref{remark_smooth_1}). Suppose $\mathcal{M}(p,q)$ can be
compactified to $\overline{\mathcal{M}(p,q)}$ having the structure of a compact
smooth manifold with faces. In addition, assume each $\overline{\mathcal{M}(p,q)}$
satisfies the following conditions:

(1). We have $\overline{\mathcal{M}(p,q)} = \bigsqcup_{I}
\mathcal{M}_{I}$, where the disjoint union is over all critical
sequences with head $p$ and tail $q$. The $k$-stratum of
$\overline{\mathcal{M}(p,q)}$ is $\bigsqcup_{|I|=k}
\mathcal{M}_{I}$, and each $M_{I}$ is an open subset of the
$k$-stratum. The smooth structure of $\overline{\mathcal{M}(p,q)}$
is compatible with those of $\mathcal{M}_{I}$.

(2). Suppose $p \succ r \succ
q$, then the natural inclusion $\overline{\mathcal{M}(p,r)} \times
\overline{\mathcal{M}(r,q)} \hookrightarrow
\overline{\mathcal{M}(p,q)}$ is a smooth embedding.
\end{assumption}

\begin{remark}\label{remark_smooth_1}
By Definition
\ref{definition_moduli_space}, $\mathcal{M}(p,q)$ has a
natural smooth structure induced from those of $\mathcal{D}(p)$ and
$\mathcal{A}(q)$ (see e.g. \cite{schwarz}, \cite{latour},
\cite{burghelea_haller}, \cite{qin1} and \cite{qin2}). However, in
order to make Assumption \ref{assumption_M_I} hold, we may give
$\mathcal{M}(p,q)$ a smooth structure different from the above one
(see Remark \ref{remark_smooth_2}).
\end{remark}

In order to make the statement of gluing conceptual and strong, we
shall have to introduce the following formal definitions.

Suppose $I_{1} = \{ r_{0}, \cdots, r_{k+1} \}$ and $I_{2} = \{
r'_{0}, \cdots, r'_{l+1} \}$ are two critical sequences. If $I_{2}
\subseteq I_{1}$, $r'_{0}=r_{0}$ and $r'_{l+1}=r_{k+1}$, i.e. $I_{2}
= \{ r_{0}, r_{i_{1}}, \cdots, r_{i_{l}}, r_{k+1} \}$, denote them
by $I_{2} \preceq I_{1}$.

We use the notation $\Lambda_{I_{1}}$ to represent the gluing
parameter for $\mathcal{M}_{I_{1}}$. Here $\Lambda_{I_{1}} =
(\lambda_{1}, \cdots, \lambda_{|I_{1}|}) \in
\prod_{i=1}^{|I_{1}|}[0, +\infty) = [0, +\infty)^{|I_{1}|}$. By the
relation between $I_{1}$ and $I_{2}$, we introduce the following
definitions of the tuples induced from $\Lambda_{I_{1}}$. Define
$\Lambda_{I_{1},I_{2}} \in [0, +\infty)^{|I_{2}|}$ as
\begin{equation}
\Lambda_{I_{1},I_{2}} = (\lambda_{i_{1}}, \cdots, \lambda_{i_{l}}).
\end{equation}
Here we consider $\Lambda_{I_{1},I_{2}}$ as a gluing parameter for
$\mathcal{M}_{I_{2}}$. Define $\Lambda_{I_{1}}(I_{1}-I_{2}) \in [0,
+\infty)^{|I_{1}|}$ as
\begin{equation}
  \Lambda_{I_{1}}(I_{1} - I_{2}) (i) =
       \begin{cases}
          0 & r_{i} \in I_{2}, \\
          \lambda_{i} & r_{i} \notin I_{2}.
       \end{cases}
\end{equation}

For example, suppose $I_{1}= \{ r_{0}, r_{1}, r_{2}, r_{3}, r_{4}
\}$, $I_{2} = \{ r_{0}, r_{2}, r_{4} \}$ and $\Lambda_{I_{1}} = (5,
6, 7)$, then $\Lambda_{I_{1},I_{2}}=(6)$ and $\Lambda_{I_{1}}(I_{1}
- I_{2}) = (5,0,7)$.

Suppose $I_{1} = \{ r_{0}, \cdots, r_{k+1} \}$, $I_{2} = \{ r'_{0},
\cdots, r'_{l+1} \}$ and $r_{k+1} = r'_{0}$. Define
\begin{equation}
I_{1} \cdot I_{2} = \{ r_{0}, \cdots, r_{k+1}, r'_{1}, \cdots,
r'_{l+1} \}.
\end{equation}
If $x_{1} = (a_{1}, \cdots, a_{k+1}) \in \mathcal{M}_{I_{1}}$ and
$x_{2} = (a'_{1}, \cdots, a'_{l+1}) \in \mathcal{M}_{I_{2}}$, then
define
\begin{equation}
x_{1} \cdot x_{2} = (a_{1}, \cdots, a_{k+1}, a'_{1}, \cdots,
a'_{l+1}) \in \mathcal{M}_{I_{1}} \times \mathcal{M}_{I_{2}} =
\mathcal{M}_{I_{1} \cdot I_{2}}.
\end{equation}
Suppose $\Lambda_{I_{1}} = (\lambda_{1}, \cdots, \lambda_{|I_{1}|})$
and $\Lambda_{I_{2}} = (\lambda'_{1}, \cdots, \lambda'_{|I_{2}|})$,
define
\begin{equation}
\Lambda_{I_{1}} \cdot \Lambda_{I_{2}} = (\lambda_{1}, \cdots,
\lambda_{|I_{1}|}, 0, \lambda'_{1}, \cdots, \lambda'_{|I_{2}|}).
\end{equation}
In particular, if $|I_{1}|=0$, then $\Lambda_{I_{1}} \cdot
\Lambda_{I_{2}} = (0, \lambda'_{1}, \cdots, \lambda'_{|I_{2}|})$. If
$|I_{2}|=0$, then $\Lambda_{I_{1}} \cdot \Lambda_{I_{2}} =
(\lambda_{1}, \cdots, \lambda_{|I_{1}|}, 0)$. If $|I_{1}| = |I_{2}|
= 0$, then $\Lambda_{I_{1}} \cdot \Lambda_{I_{2}} = (0)$.

Suppose $I = \{ r_{0}, r_{1}, \cdots, r_{k+1} \}$ is a critical
sequence. Recall that an element $x \in \mathcal{M}_{I}$ is a
(un)broken trajectory which is broken at the points $r_{i}$ ($i=1,
\cdots, k$). A gluing should be a map $G_{I}: \mathcal{M}_{I} \times
[0, \epsilon_{I})^{|I|} \longrightarrow \overline{\mathcal{M}(r_{0},
r_{|I|+1})}$ for some $\epsilon_{I} > 0$. For all $(x, \Lambda_{I})
\in \mathcal{M}_{I} \times [0, \epsilon_{I})^{|I|}$, we have
$\Lambda_{I} = (\lambda_{1}, \cdots, \lambda_{|I|})$ is a parameter
of gluing, and $G_{I}(x, \Lambda_{I})$ is the (un)broken trajectory
glued from $x$. We expect that $G_{I}(x, \Lambda_{I})$ is not
broken at $r_{i}$ if and only if $\lambda_{i} > 0$. Thus we can
interpret the gluing map as a collaring map, which leads to the
following definition.

\begin{definition}\label{definition_gluing}
A map $G_{I}: \mathcal{M}_{I} \times [0, \epsilon_{I})^{|I|}
\rightarrow \overline{\mathcal{M}(r_{0}, r_{|I|+1})}$ for some
$\epsilon_{I} > 0$ is a gluing map if it satisfies the following
properties. (1). It is a smooth embedding. In particular, if
$|I|=0$, $G_{I}: \mathcal{M}_{I} = \mathcal{M}(r_{0}, r_{1})
\rightarrow \overline{\mathcal{M}(r_{0}, r_{1})}$ is the inclusion.
(2). It satisfies the stratum condition, i.e., suppose $I = \{
r_{0}, r_{1}, \cdots, r_{k+1} \}$, $\Lambda_{I} = (\lambda_{1},
\cdots, \lambda_{|I|}) \in [0, \epsilon_{I})^{|I|}$, $I_{1} \preceq
I$, and $\lambda_{i} = 0$ if and only if $r_{i} \in I_{1}$, then for
all $x \in \mathcal{M}_{I}$, we have $G_{I}(x, \Lambda_{I}) \in
\mathcal{M}_{I_{1}}$.
\end{definition}

Now we give two examples to illustrate the compatibility issue of
gluing.

Suppose the gluing maps are defined for all critical sequences.
Suppose $I_{1}= \{ r_{0}, r_{1}, r_{2}, r_{3}, r_{4} \}$, $I_{2} =
\{ r_{0}, r_{2}, r_{4} \}$, $\Lambda_{I_{1}} = (\lambda_{1},
\lambda_{2}, \lambda_{3})$, $\lambda_{1} > 0$, $\lambda_{3} > 0$,
and $x \in \mathcal{M}_{I_{1}}$. Gluing $x$ at the points $r_{1}$
and $r_{3}$ at first, we get $y = G_{I_{1}}(x, \lambda_{1}, 0,
\lambda_{3}) \in \mathcal{M}_{I_{2}}$. Do we have $G_{I_{2}}(y,
\lambda_{2}) = G_{I_{1}}(x, \lambda_{1}, \lambda_{2}, \lambda_{3})$?
This is a question about the compatibility for a fixed critical pair
$(r_{0}, r_{4})$.

Suppose $I_{1}= \{ r_{0}, r_{1}, r_{2} \}$, $I_{2} = \{ r_{2},
r_{3}, r_{4} \}$, $\Lambda_{I_{1}} = (\lambda_{1})$,
$\Lambda_{I_{2}} = (\lambda_{2})$, $x_{1} \in \mathcal{M}_{I_{1}}$
and $x_{2} \in \mathcal{M}_{I_{2}}$. Gluing $x_{1}$ and $x_{2}$, we
get $y_{1} = G_{I_{1}}(x_{1}, \lambda_{1}) \in \mathcal{M}(r_{0},
r_{2})$ and $y_{2} = G_{I_{2}}(x_{2}, \lambda_{2}) \in
\mathcal{M}(r_{2}, r_{4})$. Do we have $G_{I_{1} \cdot I_{2}} (x_{1}
\cdot x_{2}, \lambda_{1}, 0, \lambda_{2}) = (y_{1}, y_{2})$? This is
a question about the compatibility for different critical pairs.

The following theorem answers the above two questions.

\begin{theorem}\label{theorem_gluing}
Under Assumption \ref{assumption_M_I}, the gluing maps (see Definition
\ref{definition_gluing}) can be defined for all critical sequences.
They satisfy the following compatibility:

(1). (Compatibility for one critical Pair). Suppose $I_{2} \preceq
I_{1}$, let $\epsilon = \min \{ \epsilon_{I_{1}}, \epsilon_{I_{2}}
\}$. Then, for all $x \in \mathcal{M}_{I_{1}}$ and $\Lambda_{I_{1}}
= (\lambda_{1}, \cdots, \lambda_{|I_{1}|}) \in [0, \epsilon)^{|I_{1}|}$
such that $\lambda_{i}> 0$ when $r_{i}
\notin I_{2}$, we have
\begin{equation}
G_{I_{1}}(x, \Lambda_{I_{1}}) = G_{I_{2}} (G_{I_{1}}(x,
\Lambda_{I_{1}}(I_{1} - I_{2})), \Lambda_{I_{1},I_{2}}).
\end{equation}

(2). (Compatibility for Critical Pairs). Suppose $I_{1} = \{ r_{0},
\cdots, r_{k+1} \}$ and $I_{2} = \{ r_{k+1}, \cdots, r_{n} \}$. Let
$\epsilon = \min \{ \epsilon_{I_{1}}, \epsilon_{I_{2}},
\epsilon_{I_{1} \cdot I_{2}} \}$, then for all $x_{1} \in
\mathcal{M}_{I_{1}}$, $x_{2} \in \mathcal{M}_{I_{2}}$,
$\Lambda_{I_{1}} \in [0, \epsilon)^{|I_{1}|}$, and $\Lambda_{I_{2}}
\in [0, \epsilon)^{|I_{2}|}$, we have
\begin{eqnarray}
G_{I_{1} \cdot I_{2}}(x_{1} \cdot x_{2}, \Lambda_{I_{1}} \cdot
\Lambda_{I_{2}}) & = & (G_{I_{1}}(x_{1}, \Lambda_{I_{1}}),
G_{I_{2}}(x_{2}, \Lambda_{I_{2}})) \\
& \in & \overline{\mathcal{M}(r_{0}, r_{k+1})} \times
\overline{\mathcal{M}(r_{k+1}, r_{n})}. \nonumber
\end{eqnarray}
\end{theorem}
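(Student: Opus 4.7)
The plan is to reformulate each gluing map as the parametrization of a collar neighborhood of a stratum in the compactified moduli space, and then reduce Theorem \ref{theorem_gluing} to a general result on compatible collars of manifolds with faces (namely Theorem \ref{theorem_collar}, to be proved later).

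First, I would observe that under Assumption \ref{assumption_M_I}(1), for any critical sequence $I$ with head $r_{0}$ and tail $r_{|I|+1}$, the piece $\mathcal{M}_{I}$ is an open subset of the $|I|$-stratum of the compact smooth manifold with faces $\overline{\mathcal{M}(r_{0}, r_{|I|+1})}$. A gluing map $G_{I}: \mathcal{M}_{I} \times [0, \epsilon_{I})^{|I|} \rightarrow \overline{\mathcal{M}(r_{0}, r_{|I|+1})}$ satisfying Definition \ref{definition_gluing} is then precisely a smooth embedding of a trivial half-space bundle onto a collar neighborhood of $\mathcal{M}_{I}$, with the $i$-th coordinate $\lambda_{i}$ playing the role of the normal coordinate to the face corresponding to the breaking point $r_{i}$. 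The stratum condition in Definition \ref{definition_gluing}(2) is exactly the requirement that this collar respect the face structure, since $\lambda_{i}=0$ must correspond to staying broken at $r_{i}$.

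Next, existence of such a collar for a fixed $I$ follows from the standard theory of collar neighborhoods of faces of manifolds with corners. The work of the theorem lies in choosing all $G_{I}$ simultaneously so that the two compatibility statements hold. I would address these by an induction on $|I|$, starting from the deepest strata and extending outward. For Compatibility (1), within a single $\overline{\mathcal{M}(p,q)}$, one needs the collar of a deeper stratum $\mathcal{M}_{I_{1}}$ to factor through the collar of any intermediate stratum $\mathcal{M}_{I_{2}}$ with $I_{2} \preceq I_{1}$; this is nothing other than the compatibility of iterated collars along a chain of faces in a manifold with faces. For Compatibility (2), Assumption \ref{assumption_M_I}(2) supplies the smooth embedding $\overline{\mathcal{M}(r_{0}, r_{k+1})} \times \overline{\mathcal{M}(r_{k+1}, r_{n})} \hookrightarrow \overline{\mathcal{M}(r_{0}, r_{n})}$, under which $\mathcal{M}_{I_{1} \cdot I_{2}}$ corresponds to $\mathcal{M}_{I_{1}} \times \mathcal{M}_{I_{2}}$; Compatibility (2) then amounts to requiring that the collar of $\mathcal{M}_{I_{1} \cdot I_{2}}$ in $\overline{\mathcal{M}(r_{0}, r_{n})}$ factor through this product embedding and be induced from the collars of the two factors.

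The main obstacle, and the real content of the theorem, is to produce a single coherent family of collars that satisfies both conditions at once. The inductive construction must propagate the product-type constraints imposed by (2) across all strata while preserving the iterated-collar compatibility of (1). My plan is to isolate this as a purely geometric fact about compatible collars, namely Theorem \ref{theorem_collar}: given a family of compact manifolds with faces linked by the smooth product embeddings of Assumption \ref{assumption_M_I}(2), one can choose mutually compatible collars for all strata. Once Theorem \ref{theorem_collar} is in hand, Theorem \ref{theorem_gluing} follows by reading the collars as gluing maps via the identifications described above, with $\epsilon_{I}$ chosen small enough so that each $G_{I}$ lands inside the domain of the larger collars it must factor through.
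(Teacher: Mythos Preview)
Your proposal is correct and follows essentially the same approach as the paper: you reinterpret gluing maps as collar parametrizations of the strata $\mathcal{M}_I$ inside $\overline{\mathcal{M}(r_0,r_{|I|+1})}$, observe that Assumption~\ref{assumption_M_I} provides exactly the face structure and product embeddings needed, and reduce Theorem~\ref{theorem_gluing} to the general collar-compatibility statement Theorem~\ref{theorem_collar}. The paper does precisely this, stating that Theorem~\ref{theorem_gluing} follows from Theorem~\ref{theorem_collar} applied with $\Omega$ the set of critical points and the partial order of Definition~\ref{definition_points_partial_order}; the substantive work (the double induction on $|p,q|$ and on stratum depth, using frames and product connections) is deferred to the proof of Theorem~\ref{theorem_collar}.
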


Theorem \ref{theorem_gluing} will follow from a more general Theorem
\ref{theorem_collar}.

We introduce a traditional notation of gluing as in the Introduction
(see e.g. \cite[p.\ 529]{floer1}). Suppose $\gamma_{1} \in
\mathcal{M}(p,r)$ and $\gamma_{2} \in \mathcal{M}(r,q)$ are two
trajectories. We denote the gluing map $G_{\{p,r,q\}}(\gamma_{1},
\gamma_{2}, \lambda)$ by $\gamma_{1} \#_{\lambda} \gamma_{2}$.
From Theorem \ref{theorem_gluing} we immediately derive the following.

\begin{theorem}\label{theorem_tradition_gluing}
Under Assumption \ref{assumption_M_I}, there exist
$\epsilon_{I} > 0$ for all critical sequences $I$ with $|I|=1$ or
$|I|=2$. For all $\{ r_{0}, r_{1}, r_{2} \}$, the gluing $\gamma_{1}
\#_{\lambda} \gamma_{2}$ can be defined for $(\gamma_{1},
\gamma_{2}) \in \mathcal{M}(r_{0},r_{1}) \times
\mathcal{M}(r_{1},r_{2})$ and $\lambda \in
[0,\epsilon_{\{r_{0},r_{1},r_{2}\}})$. The gluing satisfies the
following associativity:

For all $\gamma_{1} \in \mathcal{M}(p_{1},p_{2})$, $\gamma_{2} \in
\mathcal{M}(p_{1},p_{2})$, $\gamma_{3} \in
\mathcal{M}(p_{2},p_{3})$, and $\lambda_{1}$, $\lambda_{2} \in
(0,\epsilon)$, where $\epsilon = \min\{
\epsilon_{\{p_{0},p_{1},p_{2}\}},\epsilon_{\{p_{1},p_{2},p_{3}\}},\epsilon_{\{p_{0},p_{1},p_{2},p_{3}\}}
\}$, we have
\begin{equation}
\left( \gamma_{1} \#_{\lambda_{1}} \gamma_{2} \right)
\#_{\lambda_{2}} \gamma_{3} \,\, = \,\, \gamma_{1} \#_{\lambda_{1}} \left(
\gamma_{2} \#_{\lambda_{2}} \gamma_{3}  \right).
\end{equation}
\end{theorem}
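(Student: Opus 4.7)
The plan is to derive both associativity groupings as two different unpackings of a single value $G_{I_1}(x,(\lambda_1,\lambda_2))$, where $I_1 = \{p_0,p_1,p_2,p_3\}$ and $x = (\gamma_1,\gamma_2,\gamma_3) \in \mathcal{M}_{I_1}$. Theorem \ref{theorem_gluing} supplies every compatibility that is needed; the whole argument is a formal bookkeeping exercise in the $\Lambda$-tuple notation introduced just before the theorem. I take $\epsilon$ small enough that all the $\epsilon_I$'s invoked below dominate $\lambda_1$ and $\lambda_2$.

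For the left-hand grouping I apply Compatibility (1) of Theorem \ref{theorem_gluing} to $I_1$ and the sub-sequence $I_a = \{p_0,p_2,p_3\} \preceq I_1$. Unwinding the definitions gives $\Lambda_{I_1}(I_1-I_a) = (\lambda_1,0)$ and $\Lambda_{I_1,I_a} = (\lambda_2)$, so
$$
G_{I_1}(x,(\lambda_1,\lambda_2)) = G_{I_a}\bigl(G_{I_1}(x,(\lambda_1,0)),\,(\lambda_2)\bigr).
$$
The stratum condition places $G_{I_1}(x,(\lambda_1,0))$ in $\mathcal{M}_{I_a} = \mathcal{M}(p_0,p_2)\times\mathcal{M}(p_2,p_3)$. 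To identify this element concretely I invoke Compatibility (2) with the factorization $I_1 = J_1\cdot J_2$, where $J_1 = \{p_0,p_1,p_2\}$ and $J_2 = \{p_2,p_3\}$; since $(\lambda_1)\cdot() = (\lambda_1,0)$ and $G_{J_2}$ is the inclusion when $|J_2|=0$, this yields $G_{I_1}(x,(\lambda_1,0)) = (\gamma_1 \#_{\lambda_1}\gamma_2,\,\gamma_3)$. Translating $G_{I_a}$ back into the $\#$-notation then produces $G_{I_1}(x,(\lambda_1,\lambda_2)) = (\gamma_1 \#_{\lambda_1}\gamma_2)\#_{\lambda_2}\gamma_3$.

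A symmetric argument with the sub-sequence $I_b = \{p_0,p_1,p_3\} \preceq I_1$ and the factorization $I_1 = \{p_0,p_1\}\cdot\{p_1,p_2,p_3\}$ yields $G_{I_1}(x,(\lambda_1,\lambda_2)) = \gamma_1 \#_{\lambda_1}(\gamma_2 \#_{\lambda_2}\gamma_3)$. Equating the two expressions for the same $G_{I_1}(x,(\lambda_1,\lambda_2))$ gives the asserted identity. There is no serious obstacle once Theorem \ref{theorem_gluing} is in hand; the mildly delicate point is verifying the positivity hypothesis of Compatibility (1), which requires $\lambda_i > 0$ for every $r_i \notin I_a$ (respectively $\notin I_b$). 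This is precisely why the theorem demands $\lambda_1,\lambda_2 \in (0,\epsilon)$ rather than merely $[0,\epsilon)$.
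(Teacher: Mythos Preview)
Your proof is correct and follows essentially the same approach as the paper: both sides of the associativity identity are rewritten as $G_{\{p_0,p_1,p_2,p_3\}}(\gamma_1,\gamma_2,\gamma_3,\lambda_1,\lambda_2)$ by combining Compatibility~(2) (for the factorizations $\{p_0,p_1,p_2\}\cdot\{p_2,p_3\}$ and $\{p_0,p_1\}\cdot\{p_1,p_2,p_3\}$) with Compatibility~(1) (for the subsequences $\{p_0,p_2,p_3\}$ and $\{p_0,p_1,p_3\}$). Your explicit remark on why the positivity hypothesis $\lambda_1,\lambda_2>0$ is needed for Compatibility~(1) is a nice addition that the paper leaves implicit.
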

\begin{proof}
\begin{eqnarray*}
& & \left( \gamma_{1} \#_{\lambda_{1}} \gamma_{2} \right)
\#_{\lambda_{2}} \gamma_{3} \\
& = & G_{\{p_{0}, p_{2}, p_{3}\}} \left( G_{\{p_{0}, p_{1}, p_{2}\}}
(\gamma_{1}, \gamma_{2}, \lambda_{1}),
\gamma_{3}, \lambda_{2} \right) \\
& = & G_{\{p_{0}, p_{2}, p_{3}\}} \left( G_{\{p_{0}, p_{1}, p_{2},
p_{3}\}}
(\gamma_{1}, \gamma_{2}, \gamma_{3}, \lambda_{1}, 0), \lambda_{2} \right) \\
& = & G_{\{p_{0}, p_{1}, p_{2}, p_{3}\}} (\gamma_{1}, \gamma_{2},
\gamma_{3}, \lambda_{1}, \lambda_{2}).
\end{eqnarray*}
Here we have used the (2) of Theorem \ref{theorem_gluing} in the
second equality and the (1) of Theorem \ref{theorem_gluing} in the
third equality.

Similarly,
\[
\gamma_{1} \#_{\lambda_{1}} \left( \gamma_{2} \#_{\lambda_{2}}
\gamma_{3}  \right) = G_{\{p_{0}, p_{1}, p_{2}, p_{3}\}}
(\gamma_{1}, \gamma_{2}, \gamma_{3}, \lambda_{1}, \lambda_{2}).
\]
This completes the proof.
\end{proof}

\begin{remark}
Suppose $I = \{ r_{0}, \cdots, r_{n+1} \}$ is a critical sequence.
Let $\epsilon = \min \{ \epsilon_{J} \mid  \text{ $J \subseteq I$,
and $|J|=1$ or $2$.} \}$. Then, for $(\gamma_{1}, \gamma_{2}) \in
\mathcal{M}(r_{i},r_{j}) \times \mathcal{M}(r_{j},r_{k})$, the
gluing $\gamma_{1} \#_{\lambda} \gamma_{2}$ in Theorem
\ref{theorem_tradition_gluing} can be defined for $\lambda \in
[0,\epsilon)$. And the gluing satisfies the associativity. Thus we
can define $G_{J}$ on $\mathcal{M}_{J} \times (0, \epsilon)^{|J|}$
for any $J \subseteq I$ by inductive gluing of pairs of
trajectories. The definition of $G_{J}$ does not depend on the order
of the pairwise gluing.
\end{remark}

By \cite[Theorem 3.3]{qin1} and \cite[Theorem 7.5]{qin2}, Assumption
\ref{assumption_M_I} holds in certain cases. Thus Theorems
\ref{theorem_gluing} and \ref{theorem_tradition_gluing} lead to the
following two propositions. See \cite[Definition
2.16]{qin1} for the definition of a locally trivial metric.

\begin{proposition}\label{proposition_infinite_gluing}
Suppose $M$ is a complete Hilbert-Riemannian manifold equipped with Morse function $f$
satisfying Condition (C) and having finite indices. Assume that the metric on $M$ is
locally trivial and $-\nabla f$ satisfies transversality. Give
$\mathcal{M}(p,q)$ the smooth structure induced from
$\mathcal{D}(p)$ and $\mathcal{A}(q)$. Then there exist smooth
structures on $\overline{\mathcal{M}(p,q)}$ and gluing maps which
satisfy the compatibility and associativity in Theorems
\ref{theorem_gluing} and \ref{theorem_tradition_gluing}.
\end{proposition}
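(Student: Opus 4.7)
The plan is to deduce the proposition directly from Theorems \ref{theorem_gluing} and \ref{theorem_tradition_gluing} by checking that their sole hypothesis, Assumption \ref{assumption_M_I}, holds in the setting under consideration. Once the assumption is in place, the existence of the gluing maps together with the required compatibility and associativity is immediate, so the entire content of the argument lies in verifying the assumption from the hypotheses on $(M,f)$.

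First I would dispose of the preliminary items in Assumption \ref{assumption_M_I}. Countability of the critical set $\Omega$ follows because $f$ is Morse (so critical points are isolated) and the underlying Hilbert manifold $M$ is separable. The relation $\succeq$ is a partial order by the $\lambda$-Lemma argument recalled just after Definition \ref{definition_points_partial_order}, which is valid in the Hilbert-Riemannian setting. Each $\mathcal{M}(p,q)$ with $p \succ q$ is a smooth finite-dimensional manifold of dimension $\operatorname{ind}(p) - \operatorname{ind}(q) - 1$, because the indices are finite, and it is equipped with the natural smooth structure induced from $\mathcal{D}(p) \cap \mathcal{A}(q)$ as prescribed in the statement.

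Next I would invoke \cite[Theorem 3.3]{qin1}. Under precisely the hypotheses of the proposition — completeness of $M$, Condition (C), finite indices, locally trivial metric, and transversality — that theorem endows each $\overline{\mathcal{M}(p,q)}$ with a compact smooth manifold with faces structure whose $k$-stratum decomposition is $\bigsqcup_{|I|=k}\mathcal{M}_{I}$, establishing condition (1) of Assumption \ref{assumption_M_I}, and for which the product inclusions $\overline{\mathcal{M}(p,r)} \times \overline{\mathcal{M}(r,q)} \hookrightarrow \overline{\mathcal{M}(p,q)}$ are smooth embeddings, establishing condition (2). With Assumption \ref{assumption_M_I} in force, Theorems \ref{theorem_gluing} and \ref{theorem_tradition_gluing} apply verbatim and yield all of the conclusions of the proposition.

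The main obstacle, and really the only non-formal point, is to ensure that the smooth structure on $\overline{\mathcal{M}(p,q)}$ supplied by \cite[Theorem 3.3]{qin1} restricts on the open stratum to the natural smooth structure that $\mathcal{M}(p,q)$ inherits from $\mathcal{D}(p) \cap \mathcal{A}(q)$; Remark \ref{remark_smooth_1} explicitly flags this compatibility as a potential subtlety, since Assumption \ref{assumption_M_I} in principle tolerates a different smooth structure on the open stratum. I would resolve this by appealing to the fact that the charts produced in \cite{qin1} are constructed as smooth extensions of the interior ones rather than as replacements, so the two smooth structures on $\mathcal{M}(p,q)$ coincide and the resulting gluing maps are genuine gluings of the original trajectories, as required by the statement of the proposition.
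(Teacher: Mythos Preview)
Your approach is correct and is exactly the paper's: the proposition is stated immediately after the sentence ``By \cite[Theorem 3.3]{qin1} \ldots\ Assumption \ref{assumption_M_I} holds in certain cases. Thus Theorems \ref{theorem_gluing} and \ref{theorem_tradition_gluing} lead to the following two propositions,'' so the entire proof in the paper is precisely the reduction you carry out. One minor point: countability of $\Omega$ is more reliably obtained from Condition~(C) together with the Morse property (finitely many critical points with value in any bounded interval) than from separability of $M$, which is not part of the hypotheses.
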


\begin{proposition}\label{proposition_compact_gluing}
Suppose $M$ is a compact Riemannian manifold equipped with with Morse function $f$. Assume
$-\nabla f$ satisfies tranversality. Then
there exist smooth structures on $\mathcal{M}(p,q)$ and
$\overline{\mathcal{M}(p,q)}$ and  gluing maps which satisfy
the compatibility and associativity in Theorems \ref{theorem_gluing}
and \ref{theorem_tradition_gluing}.
\end{proposition}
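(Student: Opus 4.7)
The plan is to reduce the proposition to a direct application of Theorems \ref{theorem_gluing} and \ref{theorem_tradition_gluing} by verifying that Assumption \ref{assumption_M_I} holds under the hypotheses. Since $M$ is compact, the set $\Omega$ of critical points of $f$ is finite (hence certainly countable). Transversality of $-\nabla f$ implies, via the $\lambda$-lemma argument cited in the paragraph after Definition \ref{definition_points_partial_order}, that the relation $\succeq$ on $\Omega$ is a partial order. Moreover, since $M$ is finite-dimensional, each $\mathcal{M}(p,q)$ is a finite-dimensional manifold of dimension $\mathrm{ind}(p)-\mathrm{ind}(q)-1$.

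The key input is then \cite[Theorem 7.5]{qin2}, which in this compact setting furnishes a compactification $\overline{\mathcal{M}(p,q)}$ together with a smooth manifold-with-faces structure. I would cite that result to obtain the following three items simultaneously: (a) a smooth structure on each $\mathcal{M}(p,q)$ (possibly different from the one induced by $\mathcal{D}(p)\cap\mathcal{A}(q)$, as flagged in Remark \ref{remark_smooth_1}); (b) the stratification $\overline{\mathcal{M}(p,q)} = \bigsqcup_{I}\mathcal{M}_{I}$ with the $k$-stratum being $\bigsqcup_{|I|=k}\mathcal{M}_{I}$, so that condition (1) of Assumption \ref{assumption_M_I} is satisfied; and (c) the fact that the natural inclusion $\overline{\mathcal{M}(p,r)}\times\overline{\mathcal{M}(r,q)}\hookrightarrow\overline{\mathcal{M}(p,q)}$ is a smooth embedding, yielding condition (2).

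With Assumption \ref{assumption_M_I} in hand, the construction of gluing maps $G_{I}$ with the required properties from Definition \ref{definition_gluing}, as well as the compatibility relations in Theorem \ref{theorem_gluing} and the associativity in Theorem \ref{theorem_tradition_gluing}, is an immediate consequence of those theorems. The proof therefore amounts essentially to a one-line appeal to the two main theorems, once the correct smooth manifold-with-faces structure on $\overline{\mathcal{M}(p,q)}$ has been invoked.

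The main obstacle is not encountered here at all, but rather lies entirely in \cite[Theorem 7.5]{qin2}, where the manifold-with-faces structure on the compactified moduli space is constructed without the use of gluing. Thus the present proposition should be presented as a short corollary: verify the partial-order and countability clauses, quote \cite[Theorem 7.5]{qin2} for items (1) and (2) of Assumption \ref{assumption_M_I}, and then invoke Theorems \ref{theorem_gluing} and \ref{theorem_tradition_gluing} to conclude.
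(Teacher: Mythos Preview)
Your proposal is correct and matches the paper's own treatment exactly: the paper states immediately before Proposition \ref{proposition_compact_gluing} that \cite[Theorem 7.5]{qin2} verifies Assumption \ref{assumption_M_I} in this case, so that Theorems \ref{theorem_gluing} and \ref{theorem_tradition_gluing} apply directly, and Remark \ref{remark_smooth_2} confirms the point you make about the possibly non-natural smooth structure. If anything, you have spelled out more of the routine verifications (countability of $\Omega$, the partial order, finite-dimensionality) than the paper bothers to, but the logical structure is identical.
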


\begin{remark}\label{remark_smooth_2}
Proposition \ref{proposition_compact_gluing} is based on
\cite{qin2}. In the case of a compact $M$, it has the advantage that
the metric is allowed to be general. However, the smooth structure
on $\mathcal{M}(p,q)$ may be different from the natural one when the
metric is not locally trivial.
\end{remark}

\section{Generalization}\label{section_generalization}
The proof of Theorem \ref{theorem_gluing} actually does not
directly depend on the speciality of Morse theory. Therefore, we will
generalize the results to Theorem \ref{theorem_collar} which is about
collaring maps of manifolds with faces.

\begin{definition}\label{definition_manifold_with_corner}
An $n$-dimensional {\it smooth manifold with corners} is a space defined
in the same way as a smooth manifold except that its atlases are
open subsets of $[0, + \infty)^{n}$.
\end{definition}

If $L$ is a smooth manifold with corners, $x \in L$, a neighborhood
of $x$ is diffeomorphic to $(0, \epsilon)^{n-k} \times [0,
\epsilon)^{k}$, then define $c(x) = k$. Clearly, $c(x)$ does not
depend on the choice of atlas.

\begin{definition}\label{definition_k_stratum}
Suppose $L$ is a smooth manifold. We call $\{ x \in L \mid c(x) = k
\}$ the $k$-stratum of $L$. Denote it by $\partial^{k} L$.
\end{definition}

Clearly, $\partial^{k} L$ is a submanifold \textit{without} corners
inside $L$, its codimension is $k$.

\begin{definition}\label{definition_manifold_with_face} (c.f.\ \cite{janich}).
A {\it smooth manifold $L$ with faces} is a smooth manifold with corners
such that each $x$ belongs to the closures of $c(x)$ different
components of $\partial^{1} L$.
\end{definition}

Now we introduce the notation $\Omega$, $``\succeq"$, $I$ and $\mathcal{M}(p,q)$
as in Section \ref{section_main_theorems}. However, in the present context
they are generalizations: they are \textit{independent} of Morse theory.

Suppose $\Omega$ is a partially ordered set with a partial order
$``\succeq"$. Suppose $I = \{ r_{0}, r_{1}, \cdots, r_{k+1} \}$ is a
finite chain of $\Omega$, i.e., $I \subseteq \Omega$ and $r_{i}
\succ r_{i+1}$. We call $r_{0}$ the head of $I$ and $r_{k+1}$ the
tail of $I$. Define the length of $I$ as $|I|=k$. If $J \subseteq
I$, $J = \{ r'_{0}, \cdots, r'_{l+1} \}$, $r'_{0}=r_{0}$ and
$r'_{l+1}=r_{k+1}$, i.e. $J = \{ r_{0}, r_{i_{1}}, \cdots,
r_{i_{l}}, r_{k+1} \}$, denote them by $J \preceq I$. Suppose $I_{1}
= \{ r_{0}, \cdots, r_{k+1} \}$ and $I_{2} = \{ r_{k+1}, \cdots,
r_{n} \}$ are two chains. Define $ I_{1} \cdot I_{2} = \{ r_{0},
\cdots, r_{n} \}$, which is also a chain.

Suppose a finite dimensional manifold $\mathcal{M}(p,q)$ is defined
for each pair $(p,q) \subseteq \Omega$ such that $p \succ q$. For
the above chain $I$, define $\mathcal{M}_{I} = \prod_{i=0}^{|I|}
\mathcal{M}(r_{i},r_{i+1})$.

\begin{assumption}\label{assumption_set}
The partially ordered set $\Omega$ is countable. The finite
dimensional manifolds $\mathcal{M}(p,q)$ can be compactified to be
$\overline{\mathcal{M}(p,q)}$ which are compact smooth manifolds
with faces. These $\overline{\mathcal{M}(p,q)}$ satisfy the
following conditions:

(1). We have $\overline{\mathcal{M}(p,q)} = \bigsqcup_{I}
\mathcal{M}_{I}$, where the disjoint is over all finite chains $I$
with head $p$ and tail $q$. The $k$-stratum of
$\overline{\mathcal{M}(p,q)}$ is $\bigsqcup_{|I|=k}
\mathcal{M}_{I}$, and each $\mathcal{M}_{I}$ is an open subset of
the $k$-stratum. The smooth structure of
$\overline{\mathcal{M}(p,q)}$ is compatible with those of
$\mathcal{M}_{I}$.

(3). Suppose $p \succ r \succ q$, then the natural inclusion
$\overline{\mathcal{M}(p,r)} \times \overline{\mathcal{M}(r,q)}
\hookrightarrow \overline{\mathcal{M}(p,q)}$ is a smooth embedding.
\end{assumption}

We introduce the following definitions similar to Section
\ref{section_main_theorems}. Use $\Lambda_{I} = (\lambda_{1},
\cdots, \lambda_{|I|})$ to represent the collaring parameter for
$\mathcal{M}_{I}$. Define $\Lambda_{I_{1}} (I_{1} - I_{2})$,
$\Lambda_{I_{1},I_{2}}$ and $\Lambda_{I_{1}} \cdot \Lambda_{I_{2}}$.
Also for $x_{1} \in \mathcal{M}_{I_{1}}$ and $x_{2} \in
\mathcal{M}_{I_{2}}$, define $x_{1} \cdot x_{2} \in
\mathcal{M}_{I_{1} \cdot I_{2}}$.

Define the collaring map $G_{I}: \mathcal{M}_{I} \times
[0,\epsilon_{I})^{|I|} \rightarrow
\overline{\mathcal{M}(r_{0},r_{|I|+1})}$ as Definition
\ref{definition_gluing}.

The proof of the following theorem is given in Section
\ref{section_proof}.

\begin{theorem}\label{theorem_collar}
Under Assumption \ref{assumption_set}, the collaring maps $G_{I}$
can be defined for all finite chains $I$ of $\Omega$. These maps
satisfy the following compatibility:

(1). Suppose $I_{2} \preceq I_{1}$, let $\epsilon = \min \{
\epsilon_{I_{1}}, \epsilon_{I_{2}} \}$. Then, for all $x \in
\mathcal{M}_{I_{1}}$ and $\Lambda_{I_{1}} \in [0,
\epsilon)^{|I_{1}|}$ such that $\lambda_{i}> 0$ when $r_{i} \notin
I_{2}$, we have
\begin{equation}\label{theorem_collar_1}
G_{I_{1}}(x, \Lambda_{I_{1}}) = G_{I_{2}} (G_{I_{1}}(x,
\Lambda_{I_{1}}(I_{1} - I_{2})), \Lambda_{I_{1},I_{2}}).
\end{equation}

(2). Suppose $I_{1} = \{ r_{0}, \cdots, r_{k+1} \}$ and $I_{2} = \{
r_{k+1}, \cdots, r_{n} \}$. Let $\epsilon = \min \{
\epsilon_{I_{1}}, \epsilon_{I_{2}},$ $\epsilon_{I_{1} \cdot I_{2}}
\}$, then for all $x_{1} \in \mathcal{M}_{I_{1}}$, $x_{2} \in
\mathcal{M}_{I_{2}}$, $\Lambda_{I_{1}} \in [0, \epsilon)^{|I_{1}|}$,
and $\Lambda_{I_{2}} \in [0, \epsilon)^{|I_{2}|}$, we have
\begin{equation}\label{theorem_collar_2}
G_{I_{1} \cdot I_{2}}(x_{1} \cdot x_{2}, \Lambda_{I_{1}} \cdot
\Lambda_{I_{2}}) =  (G_{I_{1}}(x_{1}, \Lambda_{I_{1}}),
G_{I_{2}}(x_{2}, \Lambda_{I_{2}})).
\end{equation}
\end{theorem}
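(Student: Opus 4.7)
The plan is to construct the collaring maps $G_I$ by a simultaneous induction across all pairs $(p,q)$ with $p \succ q$, using a compatible-collar theorem for manifolds with faces as the local engine. For a single compact manifold with faces, it is standard (a manifold-with-faces analog of Jänich's collar theorem) that one can choose a collar for each face so that at a codimension-$k$ corner the $k$ individual face collars assemble into a single diffeomorphism with a product $\text{(stratum)}\times [0,\epsilon)^k$. Applying this to a fixed $\overline{\mathcal{M}(p,q)}$ produces collaring maps $G_I$ for every chain $I$ from $p$ to $q$ that automatically satisfy compatibility (1) of the theorem: the stages of condition \eqref{theorem_collar_1} correspond to expressing a corner neighborhood as an iterated product of face collars.

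To organize the induction I would filter pairs $(p,q)$ by their depth $d(p,q)$, defined as the maximum of $|I|$ over chains from $p$ to $q$; this is finite because $\overline{\mathcal{M}(p,q)}$ is compact and its stratification has only finitely many strata in each codimension. The countability of $\Omega$ allows the construction to be carried out pair by pair. The base case $d(p,q)=0$ is trivial, since then $\overline{\mathcal{M}(p,q)}=\mathcal{M}(p,q)$ has no corners and $G_{\{p,q\}}$ is the identity. Assume collars $G_J$ have been chosen for every chain whose head-tail pair has depth strictly less than $d$, and fix $(p,q)$ of depth $d$. The codimension-one stratum of $\overline{\mathcal{M}(p,q)}$ is the disjoint union, over $r$ with $p \succ r \succ q$, of the embedded images of $\overline{\mathcal{M}(p,r)}\times\overline{\mathcal{M}(r,q)}$. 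On each summand the inductive hypothesis has already fixed collaring maps $G_{I_1}$ on the $\overline{\mathcal{M}(p,r)}$ factor and $G_{I_2}$ on the $\overline{\mathcal{M}(r,q)}$ factor; their product gives a prescribed collar on a neighborhood of $\mathcal{M}(p,r)\times\mathcal{M}(r,q)$ inside that summand.

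The step I expect to be the main obstacle is arranging that the collar of $\mathcal{M}(p,r)\times\mathcal{M}(r,q)$ inside $\overline{\mathcal{M}(p,q)}$ \emph{extends} the product of the previously chosen collars on the factors, so that condition \eqref{theorem_collar_2} holds by construction. This is a boundary-value version of the collar-extension problem for manifolds with faces: one must invoke the freedom, in the existence proof for compatible collars, to prescribe the collar along a boundary-of-boundary, and then extend inward into $\overline{\mathcal{M}(p,q)}$ using the smoothness of the embedding guaranteed by Assumption \ref{assumption_set}. A partition-of-unity argument across the different summands indexed by $r$ glues the pieces, and the uniqueness-up-to-isotopy of collars lets one deform extensions until they agree with the prescribed product collars on overlaps.

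Once such globally consistent collars exist for every $(p,q)$, the collaring maps $G_I$ for longer chains inside $\overline{\mathcal{M}(p,q)}$ are defined by iterating the face collars, in any order, and the commuting-collars property at corners makes this independent of the order. Compatibility (1) then reads off from the corner model $[0,\epsilon)^{|I|}$, and compatibility (2) is built into the inductive step. The resulting $G_I$ satisfy the conclusions of the theorem, with $\epsilon_I$ taken to be the minimum of the collar widths used in the finitely many faces crossed by chains refining $I$.
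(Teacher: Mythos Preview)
Your proposal is correct and follows essentially the same route as the paper: an outer induction on the depth $|p,q|$, with the inductive step being a relative collar-extension on $\overline{\mathcal{M}(p,q)}$ that matches the already-constructed product collars $G_{I_1}\times G_{I_2}$ on each face $\overline{\mathcal{M}(p,r)}\times\overline{\mathcal{M}(r,q)}$. The paper carries out what you call the ``boundary-value collar-extension'' explicitly via a second, inner downward induction on codimension---building frames satisfying a stratum condition, choosing product connections, and defining $G_I$ through exponential maps---rather than invoking a compatible-collar theorem as a black box; but the architecture of the argument and your identification of the main obstacle coincide with the paper's.
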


\section{Face Structures}\label{section_face_structures}
In order to prove Theorem \ref{theorem_collar}, we first study the
face structures.

Suppose $L$ is manifold with faces. The closure of a component of
$\partial^{1} L$ (see Definition \ref{definition_k_stratum}) is
still connected. Following the terminology of \cite{janich}, we have
the following definition.

\begin{definition}\label{definition_face}
We call the closure of a component of $\partial^{1} L$ a connected
(closed) face of $L$. We call any union of pairwise disjoint
connected faces a face of $L$.
\end{definition}

Thus, if $F$ is a face of $L$, then $F = \bigsqcup_{\alpha \in
\mathfrak{A}} C_{\alpha}$, where $C_{\alpha}$ is the closure of
$C^{\circ}_{\alpha}$ and $C^{\circ}_{\alpha}$ is a component of
$\partial^{1} L$. As pointed in \cite{janich}, $F$ is still a
manifold with corners. We have the following result which is trivial
when $\mathfrak{A}$ is a finite set.

\begin{lemma}\label{lemma_face_1}
Using the notation as the above, we have that $F$ is a smoothly
embedded submanifold with corners inside $L$. The components of $F$
are $C_{\alpha}$. The interior of $F$ (i.e. $\partial^{0} F$) is
$\bigsqcup_{\alpha \in \mathfrak{A}} C^{\circ}_{\alpha}$ and $F$ is
a closed subset of $L$.
\end{lemma}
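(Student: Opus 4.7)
The plan is to reduce everything to an explicit local model at each point of $L$; the only obstruction is that $\mathfrak{A}$ may be infinite, and this is overcome by first establishing local finiteness of the family $\{C_{\alpha}\}_{\alpha \in \mathfrak{A}}$.

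The first step is to set up the local picture. For $y \in L$ with $c(y) = k$, choose a chart $U$ at $y$ diffeomorphic to $(0,\epsilon)^{n-k} \times [0,\epsilon)^{k}$. The subset $U \cap \partial^{1} L$ has exactly $k$ connected components, so at most $k$ global components of $\partial^{1} L$ meet $U$; by the manifold-with-faces hypothesis these must be the $k$ distinct components $C^{\circ}_{\beta_{1}}, \ldots, C^{\circ}_{\beta_{k}}$ whose closures contain $y$, and $C_{\alpha} \cap U = \emptyset$ for every other $\alpha$. Because the $C_{\alpha}$'s in $F$ are pairwise disjoint and each of $C_{\beta_{1}}, \ldots, C_{\beta_{k}}$ contains $y$, at most one $\beta_{i}$ can belong to $\mathfrak{A}$. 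Therefore $F \cap U$ is either empty (when $y \notin F$) or a single coordinate hyperplane of $U$ (when $y \in F$).

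The remaining claims then read off from this model. For closedness, if $y \notin F$ then $F \cap U = \emptyset$, so $y \notin \overline{F}$. The submanifold structure is visible from the slice, which is itself a chart for a manifold with corners of dimension $n - 1$ and provides a smooth codimension-one embedding $F \hookrightarrow L$. The same local model shows that in $F$ the point $y$ has corner index one less than in $L$, so $\partial^{0} F = F \cap \partial^{1} L$; and since $C_{\alpha} \cap \partial^{1} L = C^{\circ}_{\alpha}$ (the other points of $C_{\alpha}$ lie in strata of codimension at least two), this equals $\bigsqcup_{\alpha} C^{\circ}_{\alpha}$. Finally each $C_{\alpha}$ is connected as the closure of the connected set $C^{\circ}_{\alpha}$, and from the local picture it is both open and closed in $F$, so the $C_{\alpha}$ are exactly the connected components of $F$. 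The main obstacle in the whole argument is precisely the local finiteness step; once this is in place, everything after it is a direct unpacking of the explicit local model.
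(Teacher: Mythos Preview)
Your proposal is correct and follows essentially the same line as the paper's proof: both arguments use the local chart $(0,\epsilon)^{n-k}\times[0,\epsilon)^{k}$ together with the manifold-with-faces condition to show that the $k$ local hyperplane pieces of $\partial^{1}L$ lie in $k$ distinct global components, so that near any point $F$ is either empty or a single coordinate hyperplane slice, and then read off all the claims from this model. The only organizational difference is that the paper first analyzes a single $C_{\alpha}$ and afterwards passes to $F$, while you lead with local finiteness at an arbitrary point of $L$ and obtain closedness and the submanifold structure simultaneously; the substance is identical.
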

\begin{proof}
First, we show that $C_{\alpha}$ is a submanifold with corners and
its $0$-stratum is $C^{\circ}_{\alpha}$. It suffices to show that,
for each $x \in C_{\alpha}$, there exists an open neighborhood
$U_{x}$ of $x$ such that $U_{x} \cap C_{\alpha}$ has the desired
corner structure.

We can choose $U_{x}$ such that it has the chart $(-\epsilon,
\epsilon)^{n-l} \times [0, \epsilon)^{l}$ and $x$ has the coordinate
$(0, \cdots, 0)$. Clearly,
\[
U_{x} \cap C^{\circ}_{\alpha} \subseteq \bigsqcup_{i=1}^{l} \left[
(-\epsilon, \epsilon)^{n-l} \times (0, \epsilon)^{i-1} \times \{0\}
\times (0, \epsilon)^{l-i} \right],
\]
and $U_{x} \cap C^{\circ}_{\alpha} \neq \emptyset$. We may assume
$[(-\epsilon, \epsilon)^{n-l} \times \{0\} \times (0,
\epsilon)^{l-1}] \cap C^{\circ}_{\alpha} \neq \emptyset$. Since
$(-\epsilon, \epsilon)^{n-l} \times \{0\} \times (0,
\epsilon)^{l-1}$ is connected and contained in $\partial^{1} L$, and
$C^{\circ}_{\alpha}$ is a component of $\partial^{1} L$, we infer
that $(-\epsilon, \epsilon)^{n-l} \times \{0\} \times (0,
\epsilon)^{l-1} \subseteq C^{\circ}_{\alpha}$. By Definition
\ref{definition_manifold_with_face}, it's easy to see $U_{x} \cap
C^{\circ}_{\alpha} = (-\epsilon, \epsilon)^{n-l} \times \{0\} \times
(0, \epsilon)^{l-1}$. Since, $U_{x}$ is open, we have $U_{x} \cap
C_{\alpha}$ is the relative closure of $U_{x} \cap
C^{\circ}_{\alpha}$ in $U_{x}$. In other words, $U_{x} \cap
C_{\alpha} = (-\epsilon, \epsilon)^{n-l} \times \{0\} \times [0,
\epsilon)^{l-1}$ and the $0$-stratum of $U_{x} \cap C_{\alpha}$ is
contained in $C^{\circ}_{\alpha}$. Thus we get the desired corner
structure.

Second, we show that $F$ is a manifold with corners.

Since $C_{\alpha}$ has no intersection with other $C_{\beta}$, by
the above argument, we can see that the above open neighborhood
$U_{x}$ has no intersection with other $C_{\beta}$. Thus $\bigcup_{x
\in C_{\alpha}} U_{x}$ is an open neighborhood of $C_{\alpha}$ which
has no intersection with other $C_{\beta}$. So $C_{\alpha}$ is
relatively open in $F$. This verifies the manifold structure of $F$.

Finally, we show that $F$ is a closed subset of $L$. Suppose $x$ is
in the closure of $F$, then $x$ can be approximated by points in $F$
and thus by points in $\bigsqcup_{\alpha \in \mathfrak{A}}
C^{\circ}_{\alpha}$. By the above argument, it's easy to see that
$x$ belongs to some $C_{\alpha}$.
\end{proof}

\begin{lemma}\label{lemma_face_2}
Suppose $L$ is an $n$ dimensional manifold with faces. Suppose
$F_{i}$ ($i=1, \cdots k$) are faces of $L$ such that their interiors
are pairwise disjoint and $\bigcap_{i=1}^{k} F_{i}$ is nonempty.
Then $\bigcap_{i=1}^{k} F_{i}$ is an $n-k$ dimensional smoothly
embedded submanifold with corners inside $L$.
\end{lemma}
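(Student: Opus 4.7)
The plan is to prove the statement by working locally in coordinate charts and reducing the intersection to an intersection of coordinate hyperplanes. Since the conclusion is about being a smoothly embedded submanifold with corners of dimension $n-k$, it suffices to show that around every point $x \in \bigcap_{i=1}^{k} F_{i}$ there is an open chart of $L$ in which $\bigcap_{i=1}^{k} F_{i}$ is cut out by exactly $k$ independent coordinate equations of the form $y_{j}=0$ where $y_{j}$ is one of the ``$[0,+\infty)$-coordinates''.

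First I would fix $x \in \bigcap_{i=1}^{k} F_{i}$ and choose a chart $U_{x} \cong (-\epsilon,\epsilon)^{n-c(x)} \times [0,\epsilon)^{c(x)}$ centered at $x$, as in the proof of Lemma \ref{lemma_face_1}. By Definition \ref{definition_manifold_with_face}, the point $x$ lies in the closures of exactly $c(x)$ distinct components of $\partial^{1} L$, say $C^{\circ}_{\alpha_{1}},\ldots,C^{\circ}_{\alpha_{c(x)}}$; after relabeling coordinates I may assume that $U_{x} \cap C_{\alpha_{j}}$ is the slice $\{y_{j}=0\}$, i.e.\ $(-\epsilon,\epsilon)^{n-c(x)} \times \{y_j = 0\} \times [0,\epsilon)^{c(x)-1}$, by the local description established in the proof of Lemma \ref{lemma_face_1}.

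Next I would show that each face $F_{i}$ picks out exactly one of these $C_{\alpha_{j}}$'s locally at $x$, and that the chosen indices are pairwise distinct. Write $F_{i} = \bigsqcup_{\alpha \in \mathfrak{A}_{i}} C_{\alpha}$ with the $C_{\alpha}$ pairwise disjoint. Since $x \in F_{i}$, there is some $\alpha(i) \in \mathfrak{A}_{i}$ with $x \in C_{\alpha(i)}$, and this $\alpha(i)$ must lie in $\{\alpha_{1},\ldots,\alpha_{c(x)}\}$, say $\alpha(i)=\alpha_{j_{i}}$. By shrinking $U_{x}$ if necessary (using that the other components of $\partial^{1}L$ in $F_{i}$ stay away from $x$, as in the second step of the proof of Lemma \ref{lemma_face_1}), we get $U_{x} \cap F_{i} = U_{x} \cap C_{\alpha_{j_{i}}} = \{y_{j_{i}}=0\}$. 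The pairwise disjointness of the interiors of the $F_{i}$'s forces $C^{\circ}_{\alpha_{j_{i}}} \neq C^{\circ}_{\alpha_{j_{i'}}}$ for $i \neq i'$, hence the indices $j_{1},\ldots,j_{k}$ are distinct elements of $\{1,\ldots,c(x)\}$; in particular $k \le c(x)$.

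Finally I would conclude by combining these local descriptions. After shrinking $U_{x}$ so that the above holds for every $i$ simultaneously, the intersection satisfies
\begin{equation*}
U_{x} \cap \bigcap_{i=1}^{k} F_{i} = \{y \in U_{x} \mid y_{j_{1}} = \cdots = y_{j_{k}} = 0\},
\end{equation*}
which, after relabeling, is diffeomorphic to $(-\epsilon,\epsilon)^{n-c(x)} \times [0,\epsilon)^{c(x)-k}$, a smoothly embedded submanifold with corners of $U_{x}$ of dimension $n-k$. Since this holds around every point, $\bigcap_{i=1}^{k} F_{i}$ is an $(n-k)$-dimensional smoothly embedded submanifold with corners in $L$. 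The main obstacle is the bookkeeping in the middle step: one must carefully use both the disjointness of the connected faces inside a single $F_{i}$ (to single out one $\alpha(i)$ per $i$) and the disjointness of the interiors across different $F_{i}$'s (to ensure the $j_{i}$'s are distinct), while simultaneously shrinking $U_{x}$ uniformly in $i$ so that no extraneous components of $\partial^{1}L$ contribute.
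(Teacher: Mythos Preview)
Your proposal is correct and follows essentially the same approach as the paper's proof: both work locally in a chart around $x$, reduce to connected faces by shrinking the neighborhood, use the local description from Lemma \ref{lemma_face_1} to identify each $F_{i}$ with a coordinate hyperplane $\{y_{j_{i}}=0\}$, and invoke the pairwise disjointness of interiors to conclude that the $j_{i}$ are distinct, so that the intersection is $(-\epsilon,\epsilon)^{n-c(x)} \times \{0\}^{k} \times [0,\epsilon)^{c(x)-k}$. Your write-up is slightly more explicit about the bookkeeping (singling out one component per $F_{i}$ and noting $k \le c(x)$), but the argument is the same.
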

\begin{proof}
Let $x$ be an arbitrary point in $\bigcap_{i=1}^{k} F_{i}$. It
suffices to prove that there exists an open neighborhood $U$ of $x$
such that $U \cap \bigcap_{i=1}^{k} F_{i}$ has a corner structure.

For each $i$, $x$ belongs to an unique component of $F_{i}$. Since
this component is relatively open in $F_{i}$, we can choose $U$
small enough such that $U$ has no intersection with other
components. Thus we may assume $F_{i}$ is connected.

By the proof of Lemma \ref{lemma_face_1}, we can choose $U$ such
that it has a chart $(-\epsilon, \epsilon)^{n-l} \times [0,
\epsilon)^{l}$, $x$ has the coordinate $(0, \cdots, 0)$ and $U \cap
F_{1} = (-\epsilon, \epsilon)^{n-l} \times \{0\} \times [0,
\epsilon)^{l-1}$. Since the interior of $F_{i}$ are pairwise
disjoint, repeating this argument, we get $U \cap F_{i} =
(-\epsilon, \epsilon)^{n-l} \times [0, \epsilon)^{i-1} \times \{0\}
\times [0, \epsilon)^{l-i}$. Thus $U \cap \bigcap_{i=1}^{k} F_{i} =
(-\epsilon, \epsilon)^{n-l} \times \{0\}^{k} \times [0,
\epsilon)^{l-k}$. This verifies the corner structure.
\end{proof}

We introduce some other concepts following \cite{douady}.

\begin{definition}\label{definition_tangent_sector}
Suppose $L$ is a manifold with corners. For all $x \in L$,
\[
A_{x}L = \{ v \in T_{x}L \mid \text{$v = \gamma'(0)$ for some smooth
curve $\gamma:$ $[0,\epsilon) \longrightarrow L$.} \}
\]
is the tangent sector of $L$ at $x$.
\end{definition}

Definition \ref{definition_tangent_sector} is equivalent to the
\textit{secteur tangent} in \cite[p.\ 3]{douady}.

\begin{definition}\label{definition_normal_sector}
Suppose $L_{1}$ is a submanifold \textit{without} corners inside $L$
and $x \in L_{1}$, we define the normal sector $A_{x}(L_{1}, L) =
A_{x}L / T_{x} L_{1}$.
\end{definition}

In \cite{douady}, $A_{x}(L_{1}, L)$ is called \textit{secteur
transverse}.

Define the tangent sector bundle $AL$ as the subbundle of $TL$ with
fibers $A_{x}L$. Define the normal bundle $N(L_{1},L)$ as the bundle
whose fibers are the normal space $N_{x}(L_{1},L) = T_{x} L / T_{x}
L_{1}$. Define the normal sector bundle $A(L_{1},L)$ as the
subbundle of $N(L_{1},L)$ with fiber $A_{x}(L_{1},L)$ and
$A_{L_{1}}L$ as the restriction of $AL$ to $L_{1}$.

\begin{lemma}\label{lemma_face_3}
Under the assumption of Lemma \ref{lemma_face_2}, assume that
$L_{1}$ is an open subset of $\partial^{k} L$ and $L_{1} \subseteq
\bigcap_{i=1}^{k} F_{i}$. Then there exist smooth sections $e_{i}$
of $A_{L_{1}}L$ ($i=1, \cdots, k$) satisfying the following stratum
condition: (1). $e_{i} \in A_{L_{1}} (\bigcap_{j \neq i} F_{j})$;
(2). $\{ \pi e_{1}, \cdots, \pi e_{k} \}$ is linearly independent
everywhere and all elements in $A_{x}(L_{1},L)$ can be linearly
represented by $\{ \pi e_{1}(x), \cdots, \pi e_{k}(x) \}$ with
nonnegative coefficients, where $\pi: A_{L_{1}}L \rightarrow
A(L_{1},L)$ is the natural projection.
\end{lemma}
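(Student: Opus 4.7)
The plan is to construct the $e_i$ by averaging explicit local models with a partition of unity. Fix $x_0 \in L_1$. Following the chart argument in Lemma \ref{lemma_face_2}, I will select a chart $(U, (t_1, \ldots, t_{n-k}, y_1, \ldots, y_k))$ with image $(-\epsilon, \epsilon)^{n-k} \times [0, \epsilon)^k$ in which $L_1 \cap U = \{y_1 = \cdots = y_k = 0\}$ and each hyperplane $\{y_j = 0\}$ lies in a single connected component of $\partial^1 L$. At $x_0$ the condition $c(x_0) = k$ together with the manifold-with-faces hypothesis forces exactly $k$ components of $\partial^1 L$ to meet there; since the interiors $\partial^0 F_i$ are pairwise disjoint, these $k$ components distribute bijectively among $F_1, \ldots, F_k$. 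After relabeling the $y$-coordinates I may therefore assume $\{y_i = 0\} \subseteq F_i$ for each $i$, and set $e_i^U := \partial/\partial y_i$ on $U \cap L_1$. By construction $e_i^U$ lies in $A_{L_1}(\bigcap_{j \neq i} F_j)$ and projects, modulo $T L_1$, to a nonzero generator of the ray corresponding to $F_i$.

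Next I will cover $L_1$ by such charts $\{U_\alpha\}$, choose a smooth partition of unity $\{\rho_\alpha\}$ on $L_1$ subordinate to $\{U_\alpha \cap L_1\}$, and define
\[
e_i := \sum_\alpha \rho_\alpha\, e_i^{U_\alpha},
\]
extended by zero outside the supports. Condition (1) will then be immediate: $A_x(\bigcap_{j \neq i} F_j)$ is a convex cone and $e_i(x)$ is a nonnegative linear combination of elements of it.

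The heart of the argument is condition (2). The key observation is that the quotient $R_i(x) := A_x(\bigcap_{j \neq i} F_j)/T_x L_1$ is one-dimensional, because $\bigcap_{j \neq i} F_j$ has dimension $n - k + 1$ and $L_1$ has dimension $n - k$; in fact $R_i(x)$ is a closed half-line inside $A_x(L_1, L)$. Consequently every $\pi e_i^{U_\alpha}(x)$ is a \emph{positive} multiple of one common generator $v_i(x)$ of $R_i(x)$, so the convex combination $\pi e_i(x)$ remains a positive multiple of $v_i(x)$. In any local chart $v_i$ corresponds to $\partial/\partial y_i \bmod T L_1$, so $\{v_1(x), \ldots, v_k(x)\}$ are exactly the extreme rays of the simplicial cone $A_x(L_1, L)$, yielding both linear independence of the $\pi e_i(x)$ and the nonnegative-span property.

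The hardest part will be not the local construction, which is transparent in the chart model, but checking that condition (2) survives averaging. A partition-of-unity average could in principle collapse linear independence if the pieces pointed in slightly different directions; what saves the argument is precisely the one-dimensionality of $R_i(x)$, which forces every local choice to sit on the same ray and makes the averaging rigid. The consistent labeling of local coordinates with the global faces $F_i$—made possible by the bijection between the $k$ local face components at each $x$ and $\{F_1, \ldots, F_k\}$ noted above—handles the remaining combinatorial bookkeeping.
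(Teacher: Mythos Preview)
Your proposal is correct and follows essentially the same route as the paper. The paper is simply terser: after the same local chart observation, it notes that $L_1$ is an open subset of the $1$-stratum of $\bigcap_{j\neq i} F_j$ and therefore one may choose a smooth inward normal section $e_i$ along $L_1$ directly, whereas you spell out the underlying partition-of-unity argument and the one-dimensionality of $R_i(x)$ that makes such a choice well defined.
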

\begin{proof}
Suppose $x \in L_{1}$, by the proof of Lemma \ref{lemma_face_2},
there exists a neighborhood $U$ of $x$ such that $U$ has a chart
$(-\epsilon, \epsilon)^{n-k} \times [0, \epsilon)^{k}$, $x$ has the
coordinate $(0, \cdots, 0)$, $U \cap L_{1} = (-\epsilon,
\epsilon)^{n-k} \times \{0\}^{k}$ and $U \cap F_{i} = (-\epsilon,
\epsilon)^{n-k} \times [0, \epsilon)^{i-1} \times \{0\} \times [0,
\epsilon)^{k-i}$. Thus $U \cap \bigcap_{j \neq i} F_{j} =
(-\epsilon, \epsilon)^{n-k} \times \{0\}^{i-1} \times [0, \epsilon)
\times \{0\}^{k-i}$. Obviously, for any vector $e_{i}(x) \in A_{x}
(\bigcap_{j \neq i} F_{j}) - T_{x}L_{1}$, we have $\{ \pi e_{1}(x),
\cdots, \pi e_{k}(x) \}$ satisfies the desired property in
$A_{x}(L_{1}, L)$.

Since $L_{1}$ is an open subset of the $1$-stratum of $\bigcap_{j
\neq i} F_{j}$, we can choose a smooth inward normal section $e_{i}$
along $L_{1}$.
\end{proof}

In the case of Assumption \ref{assumption_set}, it's easy to see
that $\overline{\mathcal{M}(p,q)}$ is a manifold with faces
$\overline{\mathcal{M}(p,r)} \times \overline{\mathcal{M}(r,q)}$.
The interiors of these faces are $\mathcal{M}(p,r) \times
\mathcal{M}(r,q)$ which are pairwise disjoint. Suppose $I = \{ p,
r_{1}, \cdots, r_{k}, q \}$ is a chain of $\Omega$. Let $I_{i} = \{
p, r_{1}, \cdots, r_{i-1}, r_{i+1}, \cdots, r_{k}, q \}$. Then
$\mathcal{M}_{I}$ is the interior of $\bigcap_{i=1}^{k}
\overline{\mathcal{M}(p,r_{i})} \times
\overline{\mathcal{M}(r_{i},q)}$, and $\bigcap_{j \neq i}
\overline{\mathcal{M}(p,r_{j})} \times
\overline{\mathcal{M}(r_{j},q)} = \overline{\mathcal{M}_{I_{i}}}$.
By Lemma \ref{lemma_face_3}, we have the following corollary.

\begin{corollary}\label{corollary_normal_frame}
There exists a smooth frame $\{e_{1}, \cdots, e_{k}\}$ along
$\mathcal{M}_{I}$ satisfying the following stratum condition: (1).
$e_{i} \in A_{\mathcal{M}_{I}} \overline{\mathcal{M}_{I_{i}}}$; (2).
$\{ \pi e_{1}, \cdots, \pi e_{k} \}$ is linearly independent
everywhere and all elements in $A_{x}(\mathcal{M}_{I},
\overline{\mathcal{M}(p,q)})$ can be linearly represented by $\{ \pi
e_{1}(x), \cdots, \pi e_{k}(x) \}$ with nonnegative coefficients,
where $\pi: A_{\mathcal{M}_{I}} \overline{\mathcal{M}(p,q)}
\rightarrow A(\mathcal{M}_{I}, \overline{\mathcal{M}(p,q)})$ is the
natural projection.
\end{corollary}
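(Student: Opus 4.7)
The plan is to apply Lemma \ref{lemma_face_3} directly. All of the hypotheses have essentially been set up in the paragraph immediately preceding the corollary; the task is to confirm that the dictionary between the abstract setting of Lemma \ref{lemma_face_3} and the concrete compactified moduli spaces of Assumption \ref{assumption_set} is consistent.

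First I would take $L = \overline{\mathcal{M}(p,q)}$, which is a smooth manifold with faces by hypothesis, and choose
\[
F_i = \overline{\mathcal{M}(p,r_i)} \times \overline{\mathcal{M}(r_i,q)}, \qquad i = 1, \ldots, k.
\]
Each $F_i$ is smoothly embedded as a face of $L$ by Assumption \ref{assumption_set}(3), and the interiors $\mathcal{M}(p,r_i) \times \mathcal{M}(r_i,q)$ are pairwise disjoint because the stratification $\overline{\mathcal{M}(p,q)} = \bigsqcup_J \mathcal{M}_J$ in Assumption \ref{assumption_set}(1) assigns each point to a unique chain.

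Next I would take $L_1 = \mathcal{M}_I$. By Assumption \ref{assumption_set}(1), $\mathcal{M}_I$ is an open subset of the $k$-stratum $\partial^k L$. Using the identification $\bigcap_{j \neq i} F_j = \overline{\mathcal{M}_{I_i}}$ noted just before the corollary, we get $L_1 \subseteq \bigcap_{i=1}^k F_i$, and in particular this intersection is nonempty. All hypotheses of Lemma \ref{lemma_face_3} are thus satisfied.

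Applying Lemma \ref{lemma_face_3} then yields smooth sections $e_1, \ldots, e_k$ of $A_{\mathcal{M}_I} \overline{\mathcal{M}(p,q)}$ with $e_i \in A_{\mathcal{M}_I}\!\bigl(\bigcap_{j \neq i} F_j\bigr) = A_{\mathcal{M}_I} \overline{\mathcal{M}_{I_i}}$, and with $\{\pi e_1, \ldots, \pi e_k\}$ both linearly independent and spanning $A_x(\mathcal{M}_I, \overline{\mathcal{M}(p,q)})$ by nonnegative combinations. This is exactly the frame claimed by the corollary. I do not foresee a genuine obstacle; the only check worth carrying out carefully is the identity $\bigcap_{j \neq i} F_j = \overline{\mathcal{M}_{I_i}}$, which reduces to the observation that a point in the intersection corresponds to a broken trajectory broken at every $r_j$ with $j \neq i$, and hence lies in some $\mathcal{M}_J$ with $I_i \preceq J$, whose union over such $J$ is precisely $\overline{\mathcal{M}_{I_i}}$.
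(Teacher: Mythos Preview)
Your proposal is correct and matches the paper's approach exactly: the paper derives the corollary immediately from Lemma~\ref{lemma_face_3} after setting up the same dictionary $L=\overline{\mathcal{M}(p,q)}$, $F_i=\overline{\mathcal{M}(p,r_i)}\times\overline{\mathcal{M}(r_i,q)}$, $L_1=\mathcal{M}_I$, and noting $\bigcap_{j\neq i}F_j=\overline{\mathcal{M}_{I_i}}$ in the paragraph preceding the statement. Your extra remark on verifying this last identity is a reasonable elaboration of what the paper leaves implicit.
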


For a manifold $L$ with corners, \cite[p.\ 8]{douady} shows that there
exists a connection on $L$ such that all strata are totally
geodesic. (See \cite[Chapter 4]{chern_chen_lam} for a detailed
treatment of connections.) Suppose $L_{1}$ is a stratum of $L$. Then
by the above connection and the exponential map, \cite{douady} shows
that an open neighborhood of $L_{1}$ in $A(L_{1}, L)$ is
diffeomorphic to an open neighborhood of $L_{1}$ in $L$. Thus by the
frame in Corollary \ref{corollary_normal_frame}, we get the
following lemma.

\begin{lemma}\label{lemma_embed_normal}
There is a smooth embedding $\varphi_{I}: \mathcal{M}_{I} \times
[0,1)^{|I|} \longrightarrow \overline{\mathcal{M}(p,q)}$ satisfying
the stratum condition (See (2) in Definition
\ref{definition_gluing}).
\end{lemma}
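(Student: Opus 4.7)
The plan is to invoke the tubular neighborhood theorem for manifolds with corners cited just above the lemma, and to push the frame of Corollary \ref{corollary_normal_frame} through the associated exponential map. That result supplies a connection on $\overline{\mathcal{M}(p,q)}$ for which every stratum is totally geodesic, and its exponential $\exp$ restricts to a diffeomorphism from an open neighborhood $U$ of the zero section of the normal sector bundle $A(\mathcal{M}_I,\overline{\mathcal{M}(p,q)})$ onto an open neighborhood of $\mathcal{M}_I$ in $\overline{\mathcal{M}(p,q)}$.

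The frame $\{e_1,\dots,e_{|I|}\}$ from Corollary \ref{corollary_normal_frame} trivialises this sector bundle: the map $(x,\Lambda)\mapsto \sum_i \lambda_i \pi e_i(x)$ identifies $\mathcal{M}_I \times [0,\infty)^{|I|}$ with its total space. Composing with $\exp$ gives the candidate
\[
\tilde{\varphi}_I(x,\Lambda) \;=\; \exp_x\Bigl(\sum_{i=1}^{|I|} \lambda_i\, e_i(x)\Bigr),
\]
which is a smooth embedding on the open subset of $\mathcal{M}_I \times [0,\infty)^{|I|}$ that maps into $U$.

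To verify the stratum condition, write $F_j = \overline{\mathcal{M}(p,r_j)}\times\overline{\mathcal{M}(r_j,q)}$, so that $\overline{\mathcal{M}_{I_i}}=\bigcap_{j\neq i}F_j$ as observed before Corollary \ref{corollary_normal_frame}. The tangency $e_i \in A_{\mathcal{M}_I}\overline{\mathcal{M}_{I_i}}$ in that corollary means that $e_i$ is tangent to every $F_j$ with $j\neq i$, while the positive independence of the $\pi e_i$ means that $e_i$ strictly exits $F_i$. Fix $\Lambda$, put $S = \{i : \lambda_i>0\}$, and let $I_1 \preceq I$ be the subchain whose non-endpoints are $\{r_i : i\notin S\}$. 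For each $j\notin S$, the direction $\sum_{i\in S}\lambda_i e_i(x)$ is tangent to $F_j$, so total geodesy keeps the exponential inside $F_j$; hence $\tilde{\varphi}_I(x,\Lambda)\in\bigcap_{j\notin S}F_j$. On the other hand, the strict positivity of the $\lambda_i$ for $i\in S$ together with the outward independence of the corresponding $\pi e_i$ pushes the image out of every $F_i$ with $i\in S$. Therefore $\tilde{\varphi}_I(x,\Lambda)$ lies in the interior of $\bigcap_{j\notin S}F_j$, which is precisely $\mathcal{M}_{I_1}$.

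It remains to inflate the domain to $\mathcal{M}_I \times [0,1)^{|I|}$. By paracompactness of $\mathcal{M}_I$, choose a smooth $\rho:\mathcal{M}_I\to(0,\infty)$ small enough that $\sum_i \lambda_i\rho(x)\pi e_i(x)\in U$ for all $x\in\mathcal{M}_I$ and $\Lambda\in[0,1)^{|I|}$, and set $\varphi_I(x,\Lambda) := \tilde{\varphi}_I(x,\rho(x)\Lambda)$. Rescaling does not affect the sign pattern of $\Lambda$, so the stratum condition is inherited. The main technical obstacle is the stratum-tracking step: one must simultaneously exploit the \emph{tangency} of each $e_i$ along $|I|-1$ of the faces $F_j$ (to keep the exponential inside the correct intersection) and the \emph{outward independence} of the $\pi e_i$ in the remaining normal direction (to land in the open interior of that intersection). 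This dual role is exactly what Corollary \ref{corollary_normal_frame} packages into a single frame.
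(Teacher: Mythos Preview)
Your proposal is correct and follows essentially the same route as the paper: Douady's connection with totally geodesic strata, the exponential map giving a tubular neighborhood of $\mathcal{M}_I$ modeled on the normal sector bundle, and the frame of Corollary \ref{corollary_normal_frame} to trivialise that bundle as $\mathcal{M}_I\times[0,\infty)^{|I|}$. You unpack the stratum verification and the rescaling to $[0,1)^{|I|}$ that the paper leaves implicit, but the ingredients and the logic are the same.
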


In order to prove Theorem \ref{theorem_collar}, we need some
connections even better than the above one. This leads to the
definition of the product connection. There are several ways to
define a connection on a manifold $L$. One is as follows. A
connection is to assign each smooth curve $\gamma: [0,1]
\longrightarrow L$ a parallel transport (or displacement)
$P_{\gamma}: T_{\gamma(0)} L \longrightarrow T_{\gamma(1)} L$ which
is a linear isomorphism. Suppose $L_{1}$ and $L_{2}$ are two
manifolds with corners. Clearly, $T(L_{1} \times L_{2}) = T L_{1}
\times T L_{2}$. We define the product connection on $L_{1} \times
L_{2}$ as follows.
\begin{definition}\label{definition_product_connection}
Let $\gamma = (\gamma_{1}, \gamma_{2}): [0,1] \longrightarrow L_{1}
\times L_{2}$ be a smooth curve. Define the parallel transport
$P_{\gamma}: T_{\gamma(0)} (L_{1} \times L_{2}) \longrightarrow
T_{\gamma(1)} (L_{1} \times L_{2})$ as $P_{\gamma}(v_{1}, v_{2}) =
(P_{\gamma_{1}} v_{1}, P_{\gamma_{2}} v_{2})$, where
$P_{\gamma_{i}}$ is the parallel transport along $\gamma_{i}$.
The connection assigning $P_{\gamma}$ is the product connection.
\end{definition}
For a product connection, a curve $\gamma$ in $L_{1} \times L_{2}$
is a geodesic if and only if both $\gamma_{1}$ and $\gamma_{2}$ are
geodesics. By Lemma \ref{lemma_embed_normal}, $\varphi_{I}$ pulls
back the connection on $\overline{\mathcal{M}(p,q)}$ to
$\mathcal{M}_{I} \times [0,1)^{|I|}$. Let $\gamma$ be a curve in
$\mathcal{M}_{I} \times [0,1)^{|I|}$ such that $\gamma(t)= (x,
\sigma(t))$, where $x \in \mathcal{M}_{I}$ and $\sigma$ is a
straight line in $[0,1)^{|I|}$. If $\sigma$ passes through the
origin, then $\gamma$ is a geodesic because $\varphi_{I}$ is defined
by the exponential map. Since $\mathcal{M}_{I}$ is totally geodesic
in $\overline{\mathcal{M}(p,q)}$, we infer that $\mathcal{M}_{I}$
has a connection. Moreover, $[0,1)^{|I|}$ also has its standard flat
connection. We can define the product connection of $\mathcal{M}_{I}
\times [0,1)^{|I|}$. The product connection coincides with the old
one on $T (\mathcal{M}_{I} \times \{0\}^{|I|})$, and $\varphi_{I}$
is still given by the exponential map under the new connection. This
new connection has its advantage over the old one. In particular,
for \textit{every} straight line $\sigma$ in $[0,1)^{|I|}$,
\textit{not} necessarily passing through the origin, $\gamma(t)= (x,
\sigma(t))$ is a geodesic of the new connection. This is important
in the proof of Theorem \ref{theorem_collar}.

\section{Proof of Theorem \ref{theorem_collar}}\label{section_proof}
Before proving Theorem \ref{theorem_collar}, we shall introduce some
definitions and notation.

\begin{definition}\label{definition_pair_length}
Suppose $(p,q) \subseteq \Omega$, where $\Omega$ is the set defined
in Assumption \ref{assumption_set}. If $p \nsucc q$, then define the
length of $(p,q)$ as $|p,q| = -1$. Otherwise, define the length of
$(p,q)$ as $|p,q| = \sup \{ |I| \mid \text{$I$ is a chain with head
$p$ and tail $q$} \}$.
\end{definition}

By (1) of Assumption \ref{assumption_set}, we know that $|p,q| \leq
\text{dim}(\overline{\mathcal{M}(p,q)}) < +\infty$.

By the compactness of $\overline{\mathcal{M}(p,q)}$ and (1) of
Assumption \ref{assumption_set}, there are only finitely many chains
$I$ with head $p$ and tail $q$.

Suppose $I_{1} = \{ r_{0}, \cdots, r_{k+1} \}$ and  $I_{2} = \{
r_{0}, r_{i_{1}}, \cdots, r_{i_{l}}, r_{k+1} \}$ are two chains of
$\Omega$ such that $I_{2} \preceq I_{1}$. Like Section
\ref{section_main_theorems}, if $\Lambda_{I_{2}} = (\lambda_{i_{1}},
\cdots, \lambda_{i_{l}}) \in [0, +\infty)^{|I_{2}|}$ is a collaring
parameter for for $\mathcal{M}_{I_{2}}$, then define
$\Lambda_{I_{2}, I_{1}} \in [0, +\infty)^{|I_{1}|}$ as
\begin{equation}\label{extension_parameter}
  \Lambda_{I_{2}, I_{1}} (i) =
       \begin{cases}
          \lambda_{i} & r_{i} \in I_{2}, \\
          0 & r_{i} \notin I_{2}.
       \end{cases}
\end{equation}
Here we consider $\Lambda_{I_{2},I_{1}}$ as a collaring parameter
for $\mathcal{M}_{I_{1}}$.

If $I_{i} \prec I$ ($i=1, \cdots, n$), then define
\begin{equation}
\Lambda_{I} + \Lambda_{I_{1}} + \cdots \Lambda_{I_{n}} = \Lambda_{I}
+ \Lambda_{I_{1},I} + \cdots \Lambda_{I_{n},I}.
\end{equation}
Clearly,
\[
\Lambda_{I_{1}} = \Lambda_{I_{1}}(I_{1} - I_{2}) + \Lambda_{I_{1},
I_{2}}
\]

For example, suppose $I_{1}= \{ r_{0}, r_{1}, r_{2}, r_{3}, r_{4}
\}$, $I_{2} = \{ r_{0}, r_{2}, r_{4} \}$ and $\Lambda_{I_{1}} = (5,
6, 7)$, then $\Lambda_{I_{1},I_{2}}=(6)$, and
\[
\Lambda_{I_{1}}(I_{1}-I_{2}) + \Lambda_{I_{1},I_{2}} = (5, 0, 7) +
(6) = (5, 0, 7) + (0, 6, 0) = (5, 6, 7) = \Lambda_{I_{1}}.
\]
If $\Lambda_{I_{2}} = (8)$, then $\Lambda_{I_{2}, I_{1}} = (0,8,0)$
and
\[
\Lambda_{I_{1}} + \Lambda_{I_{2}} = (5, 6, 7) + (8) = (5, 6, 7) +
(0,8,0) = (5, 14, 7).
\]

\begin{proof}[Proof of Theorem \ref{theorem_collar}.]
We shall define $G_{I}$ by exponential maps. This requires two
things. First, we need a frame satisfying the stratum condition (See
Corollary \ref{corollary_normal_frame}) in $A(\mathcal{M}_{I},
\overline{\mathcal{M}(p,q)})$. Second, we need a connection on
$\overline{\mathcal{M}(p,q)}$. The proof is to construct the above
two things by a double induction. The outer induction is on the
length $|p,q|$. We construct the desired $G_{I}$ in the case of
$|p,q|=n$ based on the hypothesis that all $G_{I}$ have been
constructed and satisfy (\ref{theorem_collar_1}) and
(\ref{theorem_collar_2}) for all $|p,q|<n$. The inner induction is
the process to construct $G_{I}$ for a fixed pair $(p,q)$.

\textit{(1). The first step of the outer induction (the induction on
$|p,q|$).}

When $|p,q|=0$, then $\mathcal{M}_{I} =
\overline{\mathcal{M}(p,q)}$, define $G_{I}: \mathcal{M}_{I}
\rightarrow \overline{\mathcal{M}(p,q)}$ as the identity.

\textit{(2). The second step of the outer induction (the induction
on $|p,q|$).}

Suppose we have constructed the desired $G_{I}$ for all pair $(p,q)$
such that $|p,q|<n$. We shall construct $G_{I}$ in the case of
$|p,q|=n$. The construction is the inner induction. Let $X_{k}$ be
the union of all $l$-strata of $\overline{\mathcal{M}(p,q)}$ with $l
\geq k$. Clearly, $X_{k+1} \subseteq X_{k}$, $X_{1}$ is the full
boundary of $\overline{\mathcal{M}(p,q)}$. We shall construct a
family of open sets $U_{k}$ such that $U_{k+1} \subseteq U_{k}$ and
$X_{k} \subseteq U_{k}$ by an downward induction on $k$. In other
words, we construct $U_{k}$ after having constructed $U_{k+1}$. For
each $k$, we shall construct $G_{I}: (\mathcal{M}_{I} \cap U_{k})
\times [0,\epsilon)^{|I|} \rightarrow \overline{\mathcal{M}(p,q)}$
such that $\text{Im} G_{I} \subseteq U_{k}$, and all $G_{I}$ satisfy
(\ref{theorem_collar_1}) and (\ref{theorem_collar_2}). We call such
a map $G_{I}$ in $U_{k}$, denote it by $G_{I}|_{U_{k}}$. Extend
$G_{I}$ with the step of the inner induction. Clearly, $U_{1}$
contains all $\mathcal{M}_{I}$ such that $|I| > 0$. If the
construction of $G_{I}|_{U_{1}}$ is finished, we shall complete the
proof by defining $G_{\{p,q\}}$ as the inclusion.

Since $|p,q|=n$, the stratum with the lowest dimension is the
$n$-stratum.

\textit{(I). The first step of the inner induction (the induction on
$U_{k}$).}

We shall construct $U_{n}$, $G_{I}|_{U_{n}}$, frames for
$\mathcal{M}_{I} \cap U_{n}$ and a connection providing all $G_{I}$
via the exponential map. Moreover, $(\mathcal{M}_{I} \cap U_{n})
\times [0,\epsilon)^{|I|}$ will also have a product connection (see
Definition \ref{definition_product_connection} and the comment
following it) if we pull back the connection on $U_{n}$ via $G_{I}$.

We know that $X_{n} = \bigcup_{|J|=n} \mathcal{M}_{J}$. By Lemma
\ref{lemma_embed_normal}, we can construct a smooth embedding
$\varphi_{J}: \mathcal{M}_{J} \times [0, \epsilon_{0})^{|J|}
\rightarrow \overline{\mathcal{M}(p,q)}$ satisfying the stratum
condition (See (2) in Definition \ref{definition_gluing}).
Furthermore, $\mathcal{M}_{J}$ is compact because it is closed (also
open) in the lowest dimensional stratum. Choose $\epsilon_{0}$ small
enough so that $\text{Im} \varphi_{J}$ are pairwise disjoint for all $J$ such
that $|J|=n$. Fix $J = \{p, r_{1}, \cdots, r_{n}, q \}$. Suppose
$J_{l} = \{p, r_{1}, \cdots, r_{l}\}$ and $J'_{l} = \{r_{l}, \cdots,
r_{n}, q\}$. Clearly, $|p, r_{l}| < n$ and $|r_{l}, q| < n$. By the
outer induction on $|p,q|$, $G_{J_{l}}$ and $G_{J'_{l}}$ have been
defined.

\begin{lemma}\label{lemma_collar_1}
There exists $\epsilon>0$. And $\varphi_{J}$ can be modified to be
defined on $\mathcal{M}_{J} \times [0, \epsilon)^{|J|}$ such that
for all $l \in \{ 1, \cdots, n \}$, we have
\[
\varphi_{J}(x_{1} \cdot x_{2}, \Lambda_{J_{l}} \cdot
\Lambda_{J'_{l}}) = (G_{J_{l}}(x_{1}, \Lambda_{J_{l}}),
G_{J'_{l}}(x_{2}, \Lambda_{J'_{l}})).
\]
\end{lemma}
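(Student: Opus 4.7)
The plan is to realize $\varphi_{J}$ as an exponential map with respect to a carefully chosen connection on $\overline{\mathcal{M}(p,q)}$ together with a frame in $A(\mathcal{M}_{J},\overline{\mathcal{M}(p,q)})$, arranged so that on each codimension-one face of the cube $[0,\epsilon)^{|J|}$ through $\mathcal{M}_{J}$ the map coincides with the already constructed $G_{J_{l}}\times G_{J'_{l}}$. By Assumption \ref{assumption_set}(3), for each $l$ the product $G_{J_{l}}\times G_{J'_{l}}$ is a smooth embedding of $\mathcal{M}_{J}\times[0,\epsilon)^{|J_{l}|+|J'_{l}|}$ onto a collar of $\mathcal{M}_{J}$ inside the closed face $F_{l}:=\overline{\mathcal{M}(p,r_{l})}\times\overline{\mathcal{M}(r_{l},q)}$ of $\overline{\mathcal{M}(p,q)}$. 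After identifying parameters via the operation $\Lambda_{J_{l}}\cdot\Lambda_{J'_{l}}$, this is precisely the desired restriction of $\varphi_{J}$ to the hyperplane $\{\lambda_{l}=0\}$, so the lemma amounts to producing one $\varphi_{J}$ realizing these $n$ prescribed face collars simultaneously.

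First I would verify that the prescribed collars are mutually consistent on their pairwise intersections $\{\lambda_{l_{1}}=\lambda_{l_{2}}=0\}$ with $l_{1}<l_{2}$. Each prescription places the image in $\mathcal{M}(p,r_{l_{1}})\times\mathcal{M}(r_{l_{1}},r_{l_{2}})\times\mathcal{M}(r_{l_{2}},q)$, and the equality of the two sides reduces to the compatibility relation (\ref{theorem_collar_1}) applied to the pairs $(p,r_{l_{2}})$ and $(r_{l_{1}},q)$; both pairs have length strictly less than $n$, so the required compatibility is furnished by the outer induction hypothesis. Higher-codimension corners are handled by iterating this argument. Consequently the $n$ pieces of face data glue into a well-defined smooth embedding of $\mathcal{M}_{J}$ times the union of coordinate hyperplanes of $[0,\epsilon)^{|J|}$.

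Next, following the discussion after Lemma \ref{lemma_embed_normal}, I would choose a connection on $\overline{\mathcal{M}(p,q)}$ whose strata are all totally geodesic and which, on each $F_{l}$ near $\mathcal{M}_{J}$, equals the product of the connections pulled back from the parameter cube via $G_{J_{l}}$ and $G_{J'_{l}}$. Taking a frame $\{e_{1},\ldots,e_{n}\}$ as in Corollary \ref{corollary_normal_frame}, normalized so that $e_{l}$ is an inward normal to $F_{l}$ while the tangential vectors $\{e_{j}\}_{j\neq l}$ realize the coordinate vectors of the face collar on $F_{l}$, I would then redefine $\varphi_{J}(x,\Lambda)=\exp_{x}\bigl(\sum_{l}\lambda_{l}e_{l}\bigr)$. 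On the hyperplane $\{\lambda_{l}=0\}$ the image stays inside $F_{l}$ because the remaining $e_{j}$ are tangent to $F_{l}$, and the product form of the connection there forces the exponential map to reproduce $G_{J_{l}}\times G_{J'_{l}}$. Compactness of $\mathcal{M}_{J}$ then yields a uniform $\epsilon>0$ on which $\varphi_{J}$ is a smooth embedding. The principal obstacle is constructing a single connection on $\overline{\mathcal{M}(p,q)}$ near $\mathcal{M}_{J}$ that is simultaneously of product form on all $n$ faces $F_{l}$ through $\mathcal{M}_{J}$ and whose product structures agree on the higher-codimension corners; I would build it by patching a totally geodesic connection provided by Douady with the prescribed product connections via a partition of unity on a tubular neighborhood of $\mathcal{M}_{J}$, the compatibility of the patches on overlaps being exactly the outer induction compatibility used in the corner check.
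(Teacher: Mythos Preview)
Your approach is genuinely different from the paper's. The paper never tries to build the correct $\varphi_{J}$ as an exponential map for a specially designed connection; that comes only \emph{after} Lemma~\ref{lemma_collar_1} is proved. Instead the paper starts from an arbitrary collar $\varphi_{J}$ supplied by Lemma~\ref{lemma_embed_normal} and straightens it one face at a time: set $\phi_{l}=\varphi_{J}^{-1}\circ(G_{J_{l}}\times G_{J'_{l}})$, then replace $\varphi_{J}$ by $\varphi_{J}\circ\theta_{l}$, where $\theta_{l}$ is an explicit diffeomorphism of the model cube that forces $\phi_{l}$ to become the standard face inclusion. The nontrivial point is that the correction at level $j$ does not spoil the earlier ones; this is checked directly and uses the outer inductive hypothesis---specifically relation~(\ref{theorem_collar_2}), the product compatibility, applied to the shorter pairs $(p,r_{j})$ and $(r_{l},q)$. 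Your corner consistency check is the right idea, but it too rests on~(\ref{theorem_collar_2}), not~(\ref{theorem_collar_1}): what is needed is $G_{J_{l_{2}}}(x_{1}\cdot x_{2},\Lambda_{J_{l_{1}}}\cdot\Lambda_{J_{(l_{1},l_{2})}})=(G_{J_{l_{1}}}(x_{1},\Lambda_{J_{l_{1}}}),G_{J_{(l_{1},l_{2})}}(x_{2},\Lambda_{J_{(l_{1},l_{2})}}))$ and its mirror.

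The substantive gap in your argument is the step you yourself call the principal obstacle. Averaging connections by a partition of unity does yield a connection, but it will not in general keep each face $F_{l}$ totally geodesic, nor will it force the restriction to $F_{l}$ to equal the prescribed product connection---both properties are essential for your exponential map on $\{\lambda_{l}=0\}$ to reproduce $G_{J_{l}}\times G_{J'_{l}}$. The corner compatibility you verify shows that the prescribed connections on $F_{l_{1}}$ and $F_{l_{2}}$ agree on $F_{l_{1}}\cap F_{l_{2}}$, but you still must extend this coherent family of connections on the codimension-one skeleton to a single connection on a neighbourhood of $\mathcal{M}_{J}$ in $\overline{\mathcal{M}(p,q)}$ that restricts correctly on every face and makes every face totally geodesic. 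That extension can be carried out, but it requires a chart-by-chart inductive construction essentially equivalent in content to the paper's $\theta_{l}$-corrections; the partition-of-unity shortcut does not deliver it. The paper's route is therefore both more elementary and more direct: it performs the induction on collar maps rather than on connections, and postpones the connection to a later stage where only a product structure on the already-constructed $\varphi_{J}$ is needed.
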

\begin{proof}
For small $\epsilon$, $G_{J_{l}} \times G_{J'_{l}} (\mathcal{M}_{J}
\times \prod_{i=1, i \neq l}^{|J|} [0, \epsilon)) \subseteq \text{Im}
\varphi_{J}$, where $G_{J_{l}} \times G_{J'_{l}}(x_{1} \cdot x_{2},
\Lambda_{J_{l}}, \Lambda_{J'_{l}}) = (G_{J_{l}}(x_{1},
\Lambda_{J_{l}}), G_{J'_{l}}(x_{2}, \Lambda_{J'_{l}}))$.

Consider the following map $\phi_{l} = \varphi_{J}^{-1} \circ
(G_{J_{l}} \times G_{J'_{l}})$,
\[
\phi_{l}: \mathcal{M}_{J} \times \prod_{i=1, i \neq l}^{|J|} [0,
\epsilon) \rightarrow \text{\rm Im} \varphi_{J} \rightarrow \mathcal{M}_{J}
\times [0, \epsilon_{0})^{|J|}.
\]
We only need to prove that $\varphi_{J}$ can be modified such that
for all $l$,
\begin{equation}\label{lemma_collar_1_1}
\phi_{l}(x, \lambda_{1}, \cdots, \lambda_{l-1}, \lambda_{l+1},
\cdots, \lambda_{n}) = (x, \lambda_{1}, \cdots, \lambda_{l-1}, 0,
\lambda_{l+1}, \cdots, \lambda_{n}).
\end{equation}
Denote $(\lambda_{1}, \cdots, \lambda_{n})$ by $\Lambda_{J}$,
$(\lambda_{1}, \cdots, \lambda_{l-1}, \lambda_{l+1}, \cdots,
\lambda_{n})$ by $\Lambda_{J-l}$, $(\lambda_{1}, \cdots,
\lambda_{l-1})$ by $\Lambda_{J_{l}}$, and $(\lambda_{l+1}, \cdots,
\lambda_{n})$ by $\Lambda_{J'_{l}}$. Since $\text{Im}(G_{J_{l}} \times
G_{J'_{l}}) \subseteq \overline{\mathcal{M}(p,r_{l})} \times
\overline{\mathcal{M}(r_{l},q)}$ and $\varphi_{J}$ satisfies the
stratum condition, we have
\[
\phi_{l} (x, \Lambda_{J-l}) = (a, c_{1}, \cdots, c_{l-1}, 0,
c_{l+1}, \cdots, c_{n})
\]
where $a$ and $c_{i}$ are smooth functions of $x$ and
$\Lambda_{J-l}$.

Define $\theta_{l}: \mathcal{M}_{J} \times [0, \epsilon)^{|J|}
\rightarrow \mathcal{M}_{J} \times [0, \epsilon_{0})^{|J|}$ as
\begin{equation}\label{lemma_collar_1_2}
\theta_{l}(x, \Lambda_{J}) = (a, \cdots, c_{l-1}, \lambda_{l},
c_{l+1}, \cdots, c_{n}).
\end{equation}
Since $\phi_{l}$ is a smooth embedding, so is $\theta_{l}$. Since
$\mathcal{M}_{J}$ is compact, shrink $\epsilon_{0}$ if necessary, we
may assume $\theta_{l}^{-1}$ can be defined on $\mathcal{M}_{J}
\times [0, \epsilon_{0})^{|J|}$. Thus
\begin{eqnarray*}
& & (\varphi_{J} \circ \theta_{l})^{-1} \circ (G_{J_{l}} \times
G_{J'_{l}}) (x, \Lambda_{J-l}) \\
& = & \theta_{l}^{-1} \circ \phi_{l} (x,
\Lambda_{J-l}) \\
& = & (x, \lambda_{1}, \cdots, \lambda_{l-1}, 0, \lambda_{l+1},
\cdots, \lambda_{n}) \\
& = & (x, \Lambda_{J_{l}} \cdot \Lambda_{J'_{l}}).
\end{eqnarray*}
Modify $\varphi_{J}$ to be $\varphi_{J} \circ \theta_{l}$, we get
(\ref{lemma_collar_1_1}) is true for a fixed $l$ and some $\epsilon
> 0$.

In general, suppose we have proved (\ref{lemma_collar_1_1}) is true
for $l \in \{1, \cdots, j-1\}$, we shall modify $\varphi_{J}$ such
that (\ref{lemma_collar_1_1}) is true for all $l \in \{1, \cdots,
j\}$. Let $x = x_{1} \cdot x_{2} \cdot x_{3}$, where $x_{1} =
(a_{0}, \cdots, a_{l-1})$, $x_{2} = (a_{l}, \cdots, a_{j-1})$ and
$x_{3} = (a_{j}, \cdots, a_{n})$. Denote $\{ r_{l}, \cdots, r_{j}
\}$ by $J_{(l,j)}$ and $( \lambda_{l+1}, \cdots, \lambda_{j-1} )$ by
$\Lambda_{J_{(l,j)}}$.
\[
\phi_{j}(x, \Lambda_{J_{l}} \cdot \Lambda_{J_{(l,j)}},
\Lambda_{J'_{j}}) = \varphi_{J}^{-1} (G_{J_{j}}(x_{1} \cdot x_{2},
\Lambda_{J_{l}} \cdot \Lambda_{J_{(l,j)}}), G_{J'_{j}}(x_{3},
\Lambda_{J'_{j}})).
\]
Since $|p,r_{l}|<n$, by the outer inductive hypothesis, $G_{J_{j}}$
satisfies (\ref{theorem_collar_2}). Shrink $\epsilon$ if necessary,
we have
\[
G_{J_{j}}(x_{1} \cdot x_{2}, \Lambda_{J_{l}} \cdot
\Lambda_{J_{(l,j)}}) = (G_{J_{l}}(x_{1}, \Lambda_{J_{l}}),
G_{J_{(l,j)}}(x_{2}, \Lambda_{J_{(l,j)}})),
\]
Similarly,
\[
G_{J'_{l}}(x_{2} \cdot x_{3}, \Lambda_{J_{(l,j)}} \cdot
\Lambda_{J'_{j}}) = (G_{J_{(l,j)}}(x_{2}, \Lambda_{J_{(l,j)}}),
G_{J'_{j}}(x_{3}, \Lambda_{J'_{j}})).
\]
Thus
\[
G_{J_{j}} \times G_{J'_{j}} (x, \Lambda_{J_{l}} \cdot
\Lambda_{J_{(l,j)}}, \Lambda_{J'_{j}}) =  G_{J_{l}} \times
G_{J'_{l}} (x, \Lambda_{J_{l}}, \Lambda_{J_{(l,j)}} \cdot
\Lambda_{J'_{j}}).
\]
Then
\begin{eqnarray*}
& & \phi_{j}(x, \Lambda_{J_{l}} \cdot \Lambda_{J_{(l,j)}},
\Lambda_{J'_{j}}) \\
& = & \varphi_{J}^{-1} \circ (G_{J_{j}} \times G_{J'_{j}}) (x, \Lambda_{J_{l}} \cdot \Lambda_{J_{(l,j)}}, \Lambda_{J'_{j}}) \\
& = & \varphi_{J}^{-1} \circ (G_{J_{l}} \times G_{J'_{l}}) (x,
\Lambda_{J_{l}}, \Lambda_{J_{(l,j)}} \cdot \Lambda_{J'_{j}}) \\
& = & \phi_{l} (x, \Lambda_{J_{l}}, \Lambda_{J_{(l,j)}} \cdot
\Lambda_{J'_{j}}).
\end{eqnarray*}
Since $\phi_{l}$ satisfies (\ref{lemma_collar_1_1}), we have
$\phi_{l} (x, \Lambda_{J_{l}}, \Lambda_{J_{(l,j)}} \cdot
\Lambda_{J'_{j}}) = (x, \Lambda_{J_{l}} \cdot \Lambda_{J_{(l,j)}}
\cdot \Lambda_{J'_{j}})$, or
\begin{eqnarray*}
& & \phi_{j}(x, \lambda_{1}, \cdots, \lambda_{l-1}, 0,
\lambda_{l+1}, \cdots,
\lambda_{j-1}, \lambda_{j+1}, \cdots, \lambda_{n}) \\
& = & (x, \lambda_{1}, \cdots, \lambda_{l-1}, 0, \lambda_{l+1},
\cdots, \lambda_{j-1}, 0, \lambda_{j+1}, \cdots, \lambda_{n}).
\end{eqnarray*}
Define $\theta_{j}: \mathcal{M}_{J} \times [0, \epsilon)^{|J|}
\rightarrow \mathcal{M}_{J} \times [0, \epsilon_{0})^{|J|}$ as
(\ref{lemma_collar_1_2}), we have
\[
\theta_{j}(x, \lambda_{1}, \cdots, \lambda_{l-1}, 0, \lambda_{l+1},
\cdots, \lambda_{n}) = (x, \lambda_{1}, \cdots, \lambda_{l-1}, 0,
\lambda_{l+1}, \cdots, \lambda_{n}).
\]
The operation of $\theta_{j}$ on $\mathcal{M}_{J} \times \prod_{i=1,
i \neq l}^{|J|} [0, \epsilon) \times \{0\}$ is the identity. Thus
\begin{eqnarray*}
& & (\varphi_{J} \circ \theta_{j})^{-1} \circ (G_{J_{l}} \times
G_{J'_{l}}) \\
& = & \theta_{j}^{-1} \circ (\varphi_{J}^{-1} \circ (G_{J_{l}}
\times G_{J'_{l}})) \\
& = & \varphi_{J}^{-1} \circ (G_{J_{l}} \times G_{J'_{l}}) \\
& = & \phi_{l}.
\end{eqnarray*}
So if we modify $\varphi_{J}$ to be $\varphi_{J} \circ \theta_{j}$,
then $\phi_{l}$ ($l<j$) will not change and still satisfy
(\ref{lemma_collar_1_1}). However, $\phi_{j}$ may change and must
satisfy (\ref{lemma_collar_1_1}) now. Thus we get a new
$\varphi_{J}$ such that (\ref{lemma_collar_1_1}) is true for $l \in
\{ 1, \cdots, j \}$.

By repeating this process, we finish the proof of this lemma.
\end{proof}

Now we define $G_{I}$ in $\text{Im} \varphi_{J}$. If $I \npreceq J$, then
$\text{Im} \varphi_{J} \cap \mathcal{M}_{I} = \emptyset$, we don't need to
consider it. We assume $I \preceq J$.

For all $y \in \text{Im} \varphi_{J} \cap \mathcal{M}_{I}$, there exist $x
\in \mathcal{M}_{J}$ and $\Lambda_{J} \in [0,\epsilon)^{n}$ such
that $y = \varphi_{J}(x, \Lambda_{J})$ where $x$ and $\Lambda_{J}$
are unique and $\lambda_{i} = 0$ if and only if $r_{i} \in I$.
Define $G_{I}(y, \Lambda_{I}) = \varphi_{J} (x, \Lambda_{J} +
\Lambda_{I})$. Since $\varphi_{J}$ is a smooth embedding, so is
$G_{I}$. (Actually, if we identify $Im \varphi_{J}$ with
$\mathcal{M}_{J} \times [0, \epsilon)^{|J|}$ via $\varphi_{J}$, then
$G_{I}$ has the form $G_{I}((x, \Lambda_{J}), \Lambda_{I}) = (x,
\Lambda_{J} + \Lambda_{I})$.)

\begin{lemma}\label{lemma_collar_2}
The maps $G_{I}$ satisfy (\ref{theorem_collar_1}) in $Im
\varphi_{J}$.
\end{lemma}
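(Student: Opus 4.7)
The plan is to prove the lemma by a direct computation in the $\varphi_J$-coordinates, exploiting the fact that within $\mathrm{Im}\,\varphi_J$ the map $G_I$ is defined by translation of the collaring parameter along the $J$-directions.

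First I would observe that the only situation requiring verification is $I_2 \preceq I_1 \preceq J$: if $I_1 \npreceq J$ then $\mathrm{Im}\,\varphi_J \cap \mathcal{M}_{I_1}=\emptyset$, and if $I_2 \npreceq J$ then no $I_1$ with $I_2 \preceq I_1 \preceq J$ contributes. So fix $I_2 \preceq I_1 \preceq J$ and take an arbitrary $y \in \mathrm{Im}\,\varphi_J \cap \mathcal{M}_{I_1}$. By the stratum condition for $\varphi_J$, there exist unique $x \in \mathcal{M}_J$ and $\Lambda_J \in [0,\epsilon)^{|J|}$ with $y = \varphi_J(x,\Lambda_J)$ and such that the $i$-th coordinate of $\Lambda_J$ vanishes iff $r_i \in I_1$.

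Next I would unwind both sides of (\ref{theorem_collar_1}) through the definition $G_I(\varphi_J(x,\Lambda_J),\Lambda_I) = \varphi_J(x,\Lambda_J + \Lambda_{I,J})$. The left-hand side becomes $\varphi_J(x,\Lambda_J + \Lambda_{I_1,J})$. For the right-hand side, set $z := G_{I_1}(y,\Lambda_{I_1}(I_1-I_2)) = \varphi_J(x,\Lambda_J + \Lambda_{I_1}(I_1-I_2)_J)$; this point lies in $\mathcal{M}_{I_2}$ because the resulting parameter vector vanishes in position $j$ iff $r_j \in I_2$. Then
\[
G_{I_2}(z,\Lambda_{I_1,I_2}) \;=\; \varphi_J\bigl(x,\Lambda_J + \Lambda_{I_1}(I_1-I_2)_J + \Lambda_{I_1,I_2,J}\bigr).
\]
Thus the statement reduces to the vector identity
\[
\Lambda_J + \Lambda_{I_1}(I_1-I_2)_J + \Lambda_{I_1,I_2,J} \;=\; \Lambda_J + \Lambda_{I_1,J}
\]
in $[0,\epsilon)^{|J|}$, which since $\varphi_J$ is a smooth embedding is equivalent to the desired equality.

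Finally I would verify this identity position by position, using the explicit formulas for the extension and restriction of the parameter. At a position $j$ with $r_j \notin I_1$ both vectors reduce to $(\Lambda_J)_j$; at a position $r_j \in I_1 \setminus I_2$ both equal $\lambda_j$ (coming from the $(I_1-I_2)$-piece on the left and the $I_{1,J}$-piece on the right); at a position $r_j \in I_2$ both equal $\lambda_j$ (coming from the $I_{1,I_2}$-piece on the left and the $I_{1,J}$-piece on the right). This is exactly the remark made just before the proof, namely $\Lambda_{I_1} = \Lambda_{I_1}(I_1-I_2) + \Lambda_{I_1,I_2}$, pushed forward from $[0,\infty)^{|I_1|}$ to $[0,\infty)^{|J|}$.

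The verification is essentially bookkeeping; there is no substantive geometric obstacle, since we have engineered $G_I$ on $\mathrm{Im}\,\varphi_J$ precisely so that compatibility within a single $J$-neighbourhood becomes additivity of parameters. The only mild subtlety is keeping the three extension conventions ($\Lambda_{I_1,J}$, $\Lambda_{I_1}(I_1-I_2)_J$, $\Lambda_{I_1,I_2,J}$) straight so that the supports of the nonzero entries do not overlap and the sum genuinely recovers $\Lambda_{I_1,J}$.
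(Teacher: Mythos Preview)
Your proposal is correct and follows essentially the same approach as the paper: both arguments write $y=\varphi_J(x,\Lambda_J)$, unwind $G_I$ as translation of the $J$-parameter, and reduce everything to the identity $\Lambda_{I_1}=\Lambda_{I_1}(I_1-I_2)+\Lambda_{I_1,I_2}$ (which the paper invokes directly, while you verify it position by position). Your treatment is in fact slightly more careful in one respect: you explicitly check that the intermediate point $z$ lands in $\mathcal{M}_{I_2}$ so that $G_{I_2}$ is applicable, a point the paper leaves implicit.
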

\begin{proof}
Suppose $I_{2} \preceq I_{1} \preceq J$ and $y \in \text{Im} \varphi_{J}
\cap \mathcal{M}_{I_{1}}$, we need to show that $G_{I_{1}}(y,
\Lambda_{I_{1}}) = G_{I_{2}}(G_{I_{1}}(y, \Lambda_{I_{1}}(I_{1} -
I_{2})), \Lambda_{I_{1}, I_{2}})$.

Suppose $y = \varphi_{J}(x, \Lambda_{J})$, we have $G_{I_{1}}(y,
\Lambda_{I_{1}}) = \varphi_{J} (x, \Lambda_{J} + \Lambda_{I_{1}})$,
$G_{I_{1}}(y, \Lambda_{I_{1}}(I_{1} - I_{2})) = \varphi_{J} (x,
\Lambda_{J} + \Lambda_{I_{1}}(I_{1} - I_{2}))$, and
\begin{eqnarray*}
& & G_{I_{2}}(G_{I_{1}}(y, \Lambda_{I_{1}}(I_{1} - I_{2})),
\Lambda_{I_{1}, I_{2}}) \\
& = & G_{I_{2}} (\varphi_{J} (x, \Lambda_{J} +
\Lambda_{I_{1}}(I_{1} - I_{2})), \Lambda_{I_{1}, I_{2}}) \\
& = & \varphi_{J} (x, \Lambda_{J} + \Lambda_{I_{1}}(I_{1} - I_{2}) +
\Lambda_{I_{1}, I_{2}}) \\
& = & \varphi_{J} (x, \Lambda_{J} + \Lambda_{I_{1}}) = G_{I_{1}}(y,
\Lambda_{I_{1}}).
\end{eqnarray*}
This completes the proof of the lemma.
\end{proof}

\begin{lemma}\label{lemma_collar_3}
The maps $G_{I}$ satisfy (\ref{theorem_collar_2}) in $\text{Im}
\varphi_{J}$.
\end{lemma}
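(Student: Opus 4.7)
The plan is to peel off the definition of $G_{I_1 \cdot I_2}$ in $\text{Im}\, \varphi_J$, apply Lemma \ref{lemma_collar_1} twice to split the $\varphi_J$-image into a product, and then invoke the outer inductive hypothesis to rewrite each factor as an application of $G_{I_1}$ or $G_{I_2}$.

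First, the hypothesis $y = x_1 \cdot x_2 \in \mathcal{M}_{I_1 \cdot I_2} \cap \text{Im}\, \varphi_J$ forces $I_1 \cdot I_2 \preceq J$, so the common endpoint $r_{k+1}$ of $I_1$ and $I_2$ appears as some $r_l$ in $J$. Using $J_l = \{p, r_1, \ldots, r_l\}$ and $J'_l = \{r_l, \ldots, r_n, q\}$ as in Lemma \ref{lemma_collar_1}, we have $I_1 \preceq J_l$ and $I_2 \preceq J'_l$. Write $y = \varphi_J(x, \Lambda_J^0)$; by the stratum condition, $\Lambda_J^0$ vanishes exactly at the positions corresponding to elements of $I_1 \cdot I_2$, and in particular at position $l$. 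Decompose $x = x'_1 \cdot x'_2$ by the product structure $\mathcal{M}_J = \mathcal{M}_{J_l} \times \mathcal{M}_{J'_l}$ and $\Lambda_J^0 = \Lambda_{J_l}^0 \cdot \Lambda_{J'_l}^0$ (valid because the $l$-th coordinate is zero). A first application of Lemma \ref{lemma_collar_1} yields
\[
x_1 = G_{J_l}(x'_1, \Lambda_{J_l}^0), \qquad x_2 = G_{J'_l}(x'_2, \Lambda_{J'_l}^0).
\]

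By the definition of $G_{I_1 \cdot I_2}$ in $\text{Im}\, \varphi_J$,
\[
G_{I_1 \cdot I_2}(y, \Lambda_{I_1} \cdot \Lambda_{I_2}) = \varphi_J(x, \Lambda_J^0 + (\Lambda_{I_1} \cdot \Lambda_{I_2})).
\]
The combinatorial heart of the argument is the identity
\[
\Lambda_J^0 + (\Lambda_{I_1} \cdot \Lambda_{I_2}) = (\Lambda_{J_l}^0 + \Lambda_{I_1}) \cdot (\Lambda_{J'_l}^0 + \Lambda_{I_2}),
\]
which holds because both sides vanish at position $l$ (on the left, the first summand vanishes by the stratum condition and the second by the definition of the $\cdot$ operation on parameters). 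A second application of Lemma \ref{lemma_collar_1} then factors the right-hand side of the formula above as a pair, giving
\[
G_{I_1 \cdot I_2}(y, \Lambda_{I_1} \cdot \Lambda_{I_2}) = \bigl(G_{J_l}(x'_1, \Lambda_{J_l}^0 + \Lambda_{I_1}),\ G_{J'_l}(x'_2, \Lambda_{J'_l}^0 + \Lambda_{I_2})\bigr).
\]

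Since $r_l$ lies strictly between $p$ and $q$, the chain $J$ witnesses $|p, r_l|, |r_l, q| \le n - 1$, so the outer inductive hypothesis supplies (\ref{theorem_collar_1}) for the pairs $I_1 \preceq J_l$ and $I_2 \preceq J'_l$. Applying it to the first factor,
\[
G_{J_l}(x'_1, \Lambda_{J_l}^0 + \Lambda_{I_1}) = G_{I_1}(G_{J_l}(x'_1, \Lambda_{J_l}^0), \Lambda_{I_1}) = G_{I_1}(x_1, \Lambda_{I_1}),
\]
and analogously for the $J'_l$ factor; together these establish (\ref{theorem_collar_2}) in $\text{Im}\, \varphi_J$. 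The principal difficulty is purely notational: one must track each parameter through the indexing shifts among $J$, $J_l$, $J'_l$, $I_1 \cdot I_2$, $I_1$, and $I_2$, and shrink $\epsilon$ once more so that all invocations of Lemma \ref{lemma_collar_1} and of (\ref{theorem_collar_1}) are valid on a common domain.
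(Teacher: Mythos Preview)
Your proposal is correct and follows essentially the same route as the paper's proof: split $J$ at the common point $r_l$, use Lemma~\ref{lemma_collar_1} once to identify $x_1, x_2$ with images of $G_{J_l}, G_{J'_l}$, use it again together with the parameter identity $\Lambda_J^0 + (\Lambda_{I_1}\cdot\Lambda_{I_2}) = (\Lambda_{J_l}^0+\Lambda_{I_1})\cdot(\Lambda_{J'_l}^0+\Lambda_{I_2})$ to factor the result, and then invoke the outer inductive hypothesis (\ref{theorem_collar_1}) on each factor. Apart from renaming ($x_1,x_2$ in the paper are your $x'_1,x'_2$, and $y_1,y_2$ are your $x_1,x_2$) and your making the parameter identity explicit, the arguments coincide.
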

\begin{proof}
Suppose $I \preceq J$, $I = I_{1} \cdot I_{2}$, $y_{1} \in
\mathcal{M}_{I_{1}}$, $y_{2} \in \mathcal{M}_{I_{2}}$, and $y_{1}
\cdot y_{2} \in \text{Im} \varphi_{J}$. We need to show that $G_{I}(y_{1}
\cdot y_{2}, \Lambda_{I_{1}} \cdot \Lambda_{I_{2}}) =
(G_{I_{1}}(y_{1}, \Lambda_{I_{1}}), G_{I_{2}}(y_{2},
\Lambda_{I_{2}}))$.

Since $I \preceq J$, we have $J = J_{l} \cdot J'_{l}$, $I_{1}
\preceq J_{l}$ and $I_{2} \preceq J'_{l}$ for some $J_{l} = \{ p,
r_{1}, \cdots, r_{l} \}$ and $J'_{l} = \{ r_{l}, \cdots, r_{n}, q
\}$. Since $y_{1} \cdot y_{2} \in \mathcal{M}_{I_{1}} \times
\mathcal{M}_{I_{2}}$ and $y_{1} \cdot y_{2} = \varphi_{J}(x,
\Lambda_{J})$, we have $x = x_{1} \cdot x_{2}$ for some $x_{1} \in
\mathcal{M}_{J_{l}}$ and $x_{2} \in \mathcal{M}_{J'_{l}}$ and
$\Lambda_{J} = \Lambda_{J_{l}} \cdot \Lambda_{J'_{l}}$ for some
$\Lambda_{J_{l}}$ and $\Lambda_{J'_{l}}$. Thus $y_{1} \cdot y_{2} =
\varphi_{J}(x_{1} \cdot x_{2}, \Lambda_{J_{l}} \cdot
\Lambda_{J'_{l}})$. By Lemma \ref{lemma_collar_1}, $y_{1} =
G_{J_{l}}(x_{1}, \Lambda_{J_{l}})$ and $y_{2} = G_{J'_{l}}(x_{2},
\Lambda_{J'_{l}})$. Furthermore,
\begin{eqnarray*}
& & G_{I} (y_{1} \cdot y_{2}, \Lambda_{I_{1}} \cdot \Lambda_{I_{2}}) \\
& = & \varphi_{J}(x_{1} \cdot x_{2}, \Lambda_{J_{l}} \cdot
\Lambda_{J'_{l}} + \Lambda_{I_{1}} \cdot \Lambda_{I_{2}}) \\
& = & \varphi_{J}(x_{1} \cdot x_{2}, (\Lambda_{J_{l}} +
\Lambda_{I_{1}}) \cdot (\Lambda_{J'_{l}} + \Lambda_{I_{2}})).
\end{eqnarray*}
By Lemma \ref{lemma_collar_1},
\[
\varphi_{J}(x_{1} \cdot x_{2}, (\Lambda_{J_{l}} + \Lambda_{I_{1}})
\cdot (\Lambda_{J'_{l}} + \Lambda_{I_{2}})) = (G_{J_{l}}(x_{1},
\Lambda_{J_{l}} + \Lambda_{I_{1}}), G_{J'_{l}}(x_{2},
\Lambda_{J'_{l}} + \Lambda_{I_{2}})).
\]
Since $|p, r_{l}| < n$ and $|r_{l}, q| < n$, by the outer inductive
hypothesis, $G_{J_{l}}$, $G_{J'_{l}}$, $G_{I_{1}}$ and $G_{I_{2}}$
satisfy (\ref{theorem_collar_1}). Thus
\begin{eqnarray*}
& & (G_{J_{l}}(x_{1}, \Lambda_{J_{l}} + \Lambda_{I_{1}}),
G_{J'_{l}}(x_{2}, \Lambda_{J'_{l}} + \Lambda_{I_{2}})) \\
& = & (G_{I_{1}}(G_{J_{l}}(x_{1}, \Lambda_{J_{l}}),
\Lambda_{I_{1}}),
G_{I_{2}}(G_{J'_{l}}(x_{2}, \Lambda_{J'_{l}}), \Lambda_{I_{2}})) \\
& = & (G_{I_{1}}(y_{1}, \Lambda_{I_{1}}), G_{I_{2}}(y_{2},
\Lambda_{I_{2}})).
\end{eqnarray*}
This completes the proof of the lemma.
\end{proof}

We have defined the desired $G_{I}$ in $\text{Im} \varphi_{J}$ for all $I$
such that $\mathcal{M}_{I} \cap \text{Im} \varphi_{J} \neq \emptyset$.
Clearly, $(\mathcal{M}_{I} \cap \text{Im} \varphi_{J}) \times [0,
\epsilon)^{|I|}$ has a frame $\{ \frac{\partial}{\partial
\lambda_{1}}, \cdots, \frac{\partial}{\partial \lambda_{|I|}} \}$.
Then
\[
\{ \mathcal{N}_{1}(I), \cdots, \mathcal{N}_{|I|}(I) \} = d
G_{I}|_{\Lambda_{J} = 0} \cdot \left \{ \frac{\partial}{\partial
\lambda_{1}}, \cdots, \frac{\partial}{\partial \lambda_{|I|}} \right
\}
\]
serves a desired frame of $A((\mathcal{M}_{I} \cap \text{Im} \varphi_{J}),
\overline{\mathcal{M}(p,q)})$. Identify $\text{Im} \varphi_{J}$ with
$\mathcal{M}_{J} \times [0, \epsilon)^{|J|}$ via $\varphi_{J}$, give
$\text{Im} \varphi_{J}$ the product connection (See Definition
\ref{definition_product_connection} and the comment following it.).
Again, $G_{I}(y, \Lambda_{I}) = \varphi_{J}(x, \Lambda_{J} +
\Lambda_{I})$, and $\Lambda_{J} + t \Lambda_{I}$ for $t \in [0,1]$
is a line segment in $[0, \epsilon)^{|J|}$. Then $G_{I}(y, t
\Lambda_{I})$ is a geodesic segment. Thus $G_{I}(y, \Lambda_{I}) =
\exp (y, \sum_{i=1}^{|I|} \lambda_{i} \mathcal{N}_{i}(I))$ and this
connection is the desired one.

Do the above construction for each $J$ such that $|J|=n$. Clearly,
$G_{J} = \varphi_{J}$ when $|J|=n$. Let $U_{n} = \bigcup_{|J|=n} \text{Im}
G_{J}$, then $U_{n} \supseteq X_{n}$. This completes the first step
of the inner induction.

\textit{(II). The second step of the inner induction (the induction
on $U_{k}$).}

Suppose we have constructed $U_{k+1} = \bigcup_{|I_{0}| \geq k+1} \text{Im}
G_{I_{0}}$. Suppose, for all $I$, we have constructed
$G_{I}|_{U_{k+1}}$, the frames on $\mathcal{M}_{I} \cap U_{k+1}$ and
the connection on $U_{k+1}$ which provides $G_{I}$ via exponential
maps. Moreover, $(\mathcal{M}_{I} \cap U_{k+1}) \times
[0,\epsilon)^{|I|}$ has a product connection if we pull back the
connection on $U_{k+1}$ via $G_{I}$. We shall extend the above
things to those on $U_{k}$.

The construction shares many details with the first step. The
essential point is that the definition of $G_{I}|_{U_{k}}$ should be
an extension of $G_{I}|_{U_{k+1}}$.

Let $U_{k+1}(\delta) = \bigcup_{|I| \geq k+1} G_{I}|_{U_{k+1}}
(\mathcal{M}_{I} \times [0, \delta)^{|I|})$ for $\delta \in (0,
\epsilon)$. It's an open set such that $X_{k+1} \subset
U_{k+1}(\delta) \subset U_{k+1}$. Let $\overline{U_{k+1}(\delta)} =
\bigcup_{|I| \geq k+1} G_{I}|_{U_{k+1}} (\mathcal{M}_{I} \times [0,
\delta]^{|I|})$.

\begin{lemma}\label{lemma_collar_4}
The set $\overline{U_{k+1}(\delta)}$ is closed.
\end{lemma}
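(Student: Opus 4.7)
The plan is to show $\overline{U_{k+1}(\delta)}$ is sequentially closed: given $y_n \to y$ with $y_n \in \overline{U_{k+1}(\delta)}$, I want to produce $y \in \overline{U_{k+1}(\delta)}$. Because only finitely many chains $I$ have head $p$ and tail $q$ (as noted earlier via compactness of $\overline{\mathcal{M}(p,q)}$), the defining union is finite, so after extracting a subsequence I may assume all $y_n = G_I(x_n,\Lambda_n)$ for a single chain $I$ with $|I| \geq k+1$, $x_n \in \mathcal{M}_I$, and $\Lambda_n \in [0,\delta]^{|I|}$. Compactness of $[0,\delta]^{|I|}$ and of $\overline{\mathcal{M}(p,q)}$ lets me pass to further subsequences so that $\Lambda_n \to \Lambda \in [0,\delta]^{|I|}$ and $x_n \to x$ in $\overline{\mathcal{M}(p,q)}$. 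By Assumption~\ref{assumption_set}(1), this limit $x$ sits in $\mathcal{M}_{I''}$ for a unique chain $I''$ with $I \preceq I''$.

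If $I'' = I$, then by continuity $y = G_I(x,\Lambda) \in \overline{U_{k+1}(\delta)}$ directly. The interesting case is $I \prec I''$, where $|I''| > |I|$ and the limit has escaped into a deeper stratum. Here the plan is to transfer $y_n$ into the collar of $\mathcal{M}_{I''}$. At this stage of the inner induction, $G_{I''}|_{U_{k+1}}$ is already constructed, and it is a smooth embedding from a manifold with corners of the same dimension as $\overline{\mathcal{M}(p,q)}$, hence its image is an open neighborhood of $\mathcal{M}_{I''}$. Thus for $n$ large I can uniquely write $x_n = G_{I''}(x'_n, \mu_n)$ with $(x'_n,\mu_n) \to (x,0)$; the stratum condition of Definition~\ref{definition_gluing} forces the coordinates of $\mu_n$ at positions of $I$ to vanish. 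Applying the compatibility identity (\ref{theorem_collar_1}) with $I_1 = I''$ and $I_2 = I$ — which is available for $G_{\cdot}|_{U_{k+1}}$ by the inductive hypothesis — gives
\[
y_n \;=\; G_I(G_{I''}(x'_n,\mu_n),\Lambda_n) \;=\; G_{I''}(x'_n,\ \mu_n + \Lambda_{n,I''}),
\]
where $\Lambda_{n,I''}$ denotes $\Lambda_n$ extended by zeros at positions in $I'' \setminus I$. Passing to the limit yields $y = G_{I''}(x,\Lambda_{I''})$, and since $\Lambda_{I''} \in [0,\delta]^{|I''|}$ with $|I''| > |I| \geq k+1$, this exhibits $y$ as a member of $G_{I''}(\mathcal{M}_{I''} \times [0,\delta]^{|I''|}) \subseteq \overline{U_{k+1}(\delta)}$.

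The step I expect to be the main obstacle is justifying the compatibility identity at each $n$: part~(1) of Theorem~\ref{theorem_collar} requires both parameters to lie in the common domain on which $G_I$ and $G_{I''}$ are defined and compatible. Since $\mu_n \to 0$ and $\Lambda_n \in [0,\delta]^{|I|} \subset [0,\epsilon)^{|I|}$, for $n$ sufficiently large the combined parameter $\mu_n + \Lambda_{n,I''}$ sits in $[0,\epsilon_{I''})^{|I''|}$ and the identity applies. Everything else is bookkeeping with the stratification and continuity, so once this domain check is in place the closedness follows.
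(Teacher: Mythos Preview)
Your argument is correct and rests on the same key ingredient as the paper's proof, namely the compatibility identity~(\ref{theorem_collar_1}) already available for the maps $G_{\cdot}|_{U_{k+1}}$. The route, however, is organized differently. The paper does not chase sequences: for each $I_{0}$ with $|I_{0}|\geq k+1$ it uses~(\ref{theorem_collar_1}) to assemble the various $G_{I}|_{U_{k+1}}$ (for $I_{0}\preceq I$) into a single smooth embedding $\overline{G_{I_{0}}}:\overline{\mathcal{M}_{I_{0}}}\times[0,\epsilon)^{|I_{0}|}\to\overline{\mathcal{M}(p,q)}$ defined on the \emph{compact} closure $\overline{\mathcal{M}_{I_{0}}}$, and then observes that $\overline{U_{k+1}(\delta)}=\bigcup_{|I_{0}|\geq k+1}\overline{G_{I_{0}}}(\overline{\mathcal{M}_{I_{0}}}\times[0,\delta]^{|I_{0}|})$ is a finite union of compact sets, hence compact. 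Your sequential case analysis is essentially the pointwise unpacking of the fact that $\overline{G_{I_{0}}}$ extends continuously to the boundary strata of $\mathcal{M}_{I_{0}}$: the ``interesting case'' $I\prec I''$ is precisely where the extension is being verified. What the paper's packaging buys is a slightly stronger conclusion (compactness rather than mere closedness, though these coincide here) and a reusable object $\overline{G_{I_{0}}}$; what your approach buys is that it avoids having to check that the piecewise-defined $\overline{G_{I_{0}}}$ is globally smooth, since you only need continuity along a single sequence.
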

\begin{proof}
For each $I_{0}$ such that $|I_{0}| \geq k+1$, we have
$\overline{\mathcal{M}_{I_{0}}} = \bigsqcup_{I_{0} \preceq I}
\mathcal{M}_{I}$ is compact. Moreover, $G_{I_{0}}|_{U_{k+1}}:
\mathcal{M}_{I_{0}} \times [0,\epsilon)^{|I_{0}|} \rightarrow
\overline{\mathcal{M}(p,q)}$ has been defined.

Define $\overline{G_{I_{0}}}: \overline{\mathcal{M}_{I_{0}}} \times
[0,\epsilon)^{|I_{0}|} \rightarrow \overline{\mathcal{M}(p,q)}$ as
$\overline{G_{I_{0}}}(x, \Lambda_{I_{0}}) = G_{I}|_{U_{k+1}}(x,
\Lambda_{I_{0},I})$ for $(x, \Lambda_{I_{0}}) \in \mathcal{M}_{I}
\times [0,\epsilon)^{|I_{0}|}$. Since the maps $G_{I}|_{U_{k+1}}$
satisfy (\ref{theorem_collar_1}), we infer that
$\overline{G_{I_{0}}}$ is well defined and is a smooth embedding.

Thus $\overline{U_{k+1}(\delta)} = \bigcup_{|I_{0}| \geq k+1}
\overline{G_{I_{0}}}(\overline{\mathcal{M}_{I_{0}}} \times [0,
\delta]^{|I_{0}|})$ is compact.
\end{proof}

As the first step, by Lemma \ref{lemma_embed_normal}, for each $J$
such that $|J| = k$, there is a smooth embedding $\varphi_{J}:
\mathcal{M}_{J} \times [0, \epsilon_{0})^{|J|} \rightarrow
\overline{\mathcal{M}(p,q)}$ satisfying the stratum condition. Thus
$d \varphi_{J} \cdot \{ \frac{\partial}{\partial \lambda_{1}}, \cdots,
\frac{\partial}{\partial \lambda_{|J|}} \}$ is a frame satisfying the
stratum condition (See Corollary \ref{corollary_normal_frame}). By
the inner inductive hypothesis, $\mathcal{M}_{J} \cap U_{k+1}$
already has a frame $\{ \mathcal{N}_{1}(J), \cdots,
\mathcal{N}_{|J|}(J) \}$ satisfying the stratum condition. Both
$N_{i}(J)$ and $d \varphi_{J} \frac{\partial}{\partial \lambda_{i}}$
represent nonzero elements in the same $A(\mathcal{M}_{J},
\mathcal{M}_{I}) \cong [0, +\infty)$ for some $I \prec J$ such that
$|I| = |J| - 1$. Thus, for all $\alpha (x) \geq 0$, $\{ \alpha (x)
N_{i}(J) + (1 - \alpha (x)) d \varphi_{J} \frac{\partial}{\partial
\lambda_{i}} \mid i=1, \cdots, n \}$ is also a frame satisfying the
stratum condition. By Lemma \ref{lemma_collar_4} and the partition
of unity,, there is a frame satisfying the stratum condition and
coinciding with the old one in $U_{k+1}(\delta)$ for some $\delta >
0$. Also by the same reason, there is a connection in $U_{k+1} \cup
\text{Im} \varphi_{J}$ such that it coincides with the old one in
$U_{k+1}(\delta)$. Then, by the above frame and connection, we can
modify $\varphi_{J}$ such that it coincides with $G_{J}|_{U_{k+1}}$
in $U_{k+1}(\delta)$. Since $\mathcal{M}_{J} - U_{k+1}(\delta) =
\overline{\mathcal{M}_{J}} - U_{k+1}(\delta)$ is compact, and
$G_{J}|_{U_{k+1}}$ is an embedding, by Lemma \ref{lemma_collar_4},
we infer $\varphi_{J}$ is an embedding defined on $\mathcal{M}_{J}
\times [0, \epsilon_{0})^{|J|}$ for some $\epsilon_{0} \in (0,
\delta]$. Just as the first step, we can modify $\varphi_{J}$
furthermore such that it satisfies the conclusion of Lemma
\ref{lemma_collar_1}. Since originally $\varphi_{J}$ and
$G_{J}|_{U_{k+1}}$ coincide in $U_{k+1}(\delta)$ and
$G_{J}|_{U_{k+1}}$ satisfies (\ref{theorem_collar_2}), the
modification does not change $\varphi_{J}|_{U_{k+1}(\delta)}$. Thus
the modified $\varphi_{J}$ still coincides with $G_{J}|_{U_{k+1}}$
in $U_{k+1}(\delta)$.

The big difference between this step and the first step is as
follows. In the first step, $\text{Im} \varphi_{J}$ are pairwise
disjoint for $|J|=n$. Thus there is no contradiction of the
definition when $G_{I}$ is defined in each $\text{Im} \varphi_{J}$. Now
it's impossible to make $\text{Im} \varphi_{J}$ pairwise disjoint. We
shall control their pair-wise intersections. Suppose $J_{1} \neq
J_{2}$ and $|J_{1}| = |J_{2}| = k$. Then $(\mathcal{M}_{J_{1}} -
U_{k+1}(\delta)) \cap (\mathcal{M}_{J_{2}} - U_{k+1}(\delta))
\subseteq \mathcal{M}_{J_{1}} \cap \mathcal{M}_{J_{2}} = \emptyset$.
Since $\mathcal{M}_{J_{i}} - U_{k+1}(\delta)$ is compact, shrink
$\epsilon_{0}$ if necessary, we have
\[
\varphi_{J_{1}} \left( (\mathcal{M}_{J_{1}} - U_{k+1}(\delta))
\times [0, \epsilon_{0})^{|J_{1}|} \right) \cap \varphi_{J_{2}}
\left( (\mathcal{M}_{J_{2}} - U_{k+1}(\delta)) \times [0,
\epsilon_{0})^{|J_{2}|} \right) = \emptyset.
\]
Since
\[
\varphi_{J_{i}} \left( (\mathcal{M}_{J_{i}} \cap U_{k+1}(\delta))
\times [0, \epsilon_{0})^{|J_{i}|} \right) \subseteq
U_{k+1}(\delta),
\]
we get $\text{Im} \varphi_{J_{1}} \cap \text{Im} \varphi_{J_{2}} \subseteq
U_{k+1}(\delta)$.

Now we define $G_{I}$ in each $\text{Im} \varphi_{J}$. We only need to
consider $I$ such that $I \preceq J$. For all $y \in \mathcal{M}_{I}
\cap \text{Im} \varphi_{J}$, $y = \varphi_{J}(x, \Lambda_{J})$, define
$\widetilde{G}_{I}(J) (y, \Lambda_{I}) = \varphi_{J}(x, \Lambda_{J}
+ \Lambda_{I})$. Given $\varphi_{J} = G_{J}|_{U_{k+1}}$ in
$U_{k+1}(\delta)$, similarly to the argument in the first step, we
get $\widetilde{G}_{I}(J) = G_{I}|_{U_{k+1}}$ in $U_{k+1}(\delta)$.
Since $\text{Im} \varphi_{J_{1}} \cap \text{Im} \varphi_{J_{2}} \subseteq
U_{k+1}(\delta)$, $\widetilde{G}_{I}(J_{1})$ coincides with
$\widetilde{G}_{I}(J_{2})$ in their common domains. Define
$G_{I}|_{\text{Im} \varphi_{J}} = \widetilde{G}_{I}(J)$. Then $G_{I}$ is
well defined on $U_{k+1}(\delta) \cup \bigcup_{|J|=k} \text{Im}
\varphi_{J}$ and it coincides with $G_{I}|_{U_{k+1}}$ in
$U_{k+1}(\delta)$.

Similarly to the first step, the maps $G_{I}|_{\text{Im} \varphi_{J}}$
satisfy (\ref{theorem_collar_1}) and (\ref{theorem_collar_2}).

Shrink $U_{k+1}$ to be $U_{k+1}(\epsilon_{0})$. Again, $G_{J} =
\varphi_{J}$ when $|J|=k$. Let
\[
U_{k} = U_{k+1} \cup \bigcup_{|J|=k} G_{J}(\mathcal{M}_{J} \times
[0,\epsilon_{0})^{k}).
\]
The desired $G_{I}|_{U_{k}}$ is defined in the above. Shrink
$\epsilon_{I}$ to be $\epsilon_{0}$ for all $I$. Give frames to
$\mathcal{M}_{I} \cap U_{k}$ as the first step. For $|J|=k$, give
$\text{Im} G_{J}$ the product connection via $G_{J}$. The old connection in
$U_{k+1}$ is the product connection. Thus the new connection in $\text{Im}
G_{J}$ coincides with the old one in $U_{k+1}$. This completes the
second step of the inner induction.

\textit{(III). The completion of the second step of the outer
induction (the induction on $|p,q|$).}

For the fixed pair $(p,q)$, the construction in $U_{k}$ requires a
shrink of $\epsilon_{I}$ for all $I$ with head $p$ and tail $q$.
However, the inner induction stops in a finite number of steps.
Eventually, we have $\epsilon_{I} > 0$ which are the same for all
$I$ with head $p$ and tail $q$. And if $I_{1} \cdot I_{2} = I$, then
$\epsilon_{I} \leq \epsilon_{I_{i}}$. Thus we have constructed the
desired $G_{I}$ for the pair $(p,q)$ with length $n$. This completes
the second step of the outer induction and also the proof of this
theorem.
\end{proof}

\section{A Byproduct}\label{section_a_byproduct}
The argument for Theorem \ref{theorem_collar} already gives the
following Proposition \ref{proposition_collar} which gives a
compatible collar structure for an arbitrary compact manifold with
faces.

Suppose $L$ is a smooth manifold with faces. Suppose $F_{i}$ ($i=1,
\cdots, n$) are its faces such that $\bigcup_{i=1}^{n} F_{i} =
\bigcup_{k>0} \partial^{k} L$. In other words, $\bigcup_{i=1}^{n}
F_{i}$ is the full boundary of $L$. Suppose the interiors of $F_{i}$
are pairwise disjoint.

Let $I = \{ i_{1}, \cdots, i_{k} \}$ be a subset of $\{ 1, \cdots, n
\}$. Define $|I|=k$. Define $F_{I} = \bigcap_{i \in I} F_{i}$. In
particular, when $I = \emptyset$, define $F_{\emptyset} = L$. Then,
by Lemma \ref{lemma_face_2}, $F_{I}$ is either empty or an $n-k$
dimensional smoothly embedded submanifold with corners insider $L$.
Denote the interior of $F_{I}$ by $F_{I}^{\circ}$.

Let $V_{I} = \prod_{i \in I} [0, +\infty)$ be a factor space of $[0,
+\infty)^{n}$. In other words, $V_{I}$ is the product of the $i$th
coordinate spaces of $[0, +\infty)^{n}$ such that $i \in I$. In
particular, $V_{\emptyset}$ consists of one point. Let
$V_{I}(\epsilon) = \prod_{i \in I} [0, \epsilon)$.

Let $\Lambda_{I} = \{ \lambda_{i_{1}}, \cdots, \lambda_{i_{k}} \}
\in V_{I}$ represent the collaring parameter for $F_{I}^{\circ}$.
Suppose $J \subseteq I$. Define $\Lambda_{I}(I-J) \in V_{I}$ as
\[
  \Lambda_{I}(I - J) (i) =
       \begin{cases}
          0 & i \in J, \\
          \lambda_{i} & i \in I-J.
       \end{cases}
\]
Define $\Lambda_{I,J} \in V_{J}$ as $\Lambda_{I,J}(i) = \lambda_{i}$
for $i \in J$.

\begin{proposition}\label{proposition_collar}
Suppose $L$ is compact. Then collaring maps $G_{I}: F_{I}^{\circ}
\times V_{I}(1) \rightarrow L$ can be defined for all $I$ such that
$F_{I}^{\circ} \neq \emptyset$. These maps satisfy the following
conditions:

(1). They are smooth embeddings which satisfy the following stratum
condition. If $J \subseteq I = \{ i_{1}, \cdots, i_{k} \}$,
$\Lambda_{I} = \{ \lambda_{i_{1}}, \cdots, \lambda_{i_{k}} \} \in
V_{I}(1)$, and $\lambda_{i} = 0$ if and only if $i \in J$, then
$G_{I}(x, \Lambda_{I}) \in F_{J}^{\circ}$ for all $x \in
F_{I}^{\circ}$. In particular, $G_{\emptyset}: F_{\emptyset}^{\circ}
= \partial^{0} L \rightarrow L$ is the inclusion.

(2). They satisfy the following compatibility. If $J \subseteq I$
and $\lambda_{i} > 0$ when $i \notin J$, then, for all $x \in
F_{I}^{\circ}$, we have
\[
G_{I}(x, \Lambda_{I}) = G_{J} (G_{I}(x, \Lambda_{I}(I - J)),
\Lambda_{I,J}).
\]
\end{proposition}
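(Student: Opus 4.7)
\medskip

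The plan is to mirror the inner induction used in the proof of Theorem \ref{theorem_collar}, but adapted to a single manifold $L$ rather than a family $\overline{\mathcal{M}(p,q)}$ indexed by pairs. Because we work with one $L$, there is no outer induction on $|p,q|$ and, crucially, no analogue of condition (\ref{theorem_collar_2}); only the one-sided compatibility (2) is required. The backbone of the construction is: (i) use Lemma \ref{lemma_face_3} on each $F_I^{\circ}$ to produce a smooth frame of inward normal sections $\{e_1,\ldots,e_{|I|}\}$ of $A_{F_I^{\circ}}L$ satisfying the stratum condition; (ii) build a connection on (a neighborhood of) each stratum whose exponential map realizes a tubular embedding $\varphi_I:F_I^{\circ}\times V_I(\epsilon_0)\to L$, as in Lemma \ref{lemma_embed_normal}; and (iii) define $G_I$ by $G_I(\varphi_J(x,\Lambda_J),\Lambda_I)=\varphi_J(x,\Lambda_J+\Lambda_{I,J})$ whenever $I\subseteq J$ and $y=\varphi_J(x,\Lambda_J)\in F_I^{\circ}$. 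Because the pull-back connection is the product connection on $F_J^{\circ}\times V_J(\epsilon_0)$, straight-line translations in $V_J(\epsilon_0)$ are geodesics, and the compatibility (2) on $\mathrm{Im}\,\varphi_J$ reduces to the tautological identity $\Lambda_I(I-J)+\Lambda_{I,J}=\Lambda_I$.

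The induction is downward on $|I|$. Let $N=\max\{|I|:F_I^{\circ}\neq\emptyset\}$ and let $X_k=\bigcup_{|I|\geq k}F_I^{\circ}$. I will construct open neighborhoods $U_k\supseteq X_k$ with $U_{k+1}\subseteq U_k$, together with $G_I|_{U_k}$ for every $I$, satisfying (1) and (2) on $U_k$. For the base step ($k=N$), the strata $F_J^{\circ}$ with $|J|=N$ are closed in $\bigcup_{k>0}\partial^k L$ and pairwise disjoint; by Lemma \ref{lemma_embed_normal} choose disjoint tubes $\varphi_J$ (shrinking $\epsilon_0$ if necessary, which is possible by compactness of $L$) and define $G_J=\varphi_J$ and $G_I$ on $\mathrm{Im}\,\varphi_J$ by the formula above for each $I\subsetneq J$. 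For the inductive step, assume $U_{k+1}$ and $G_I|_{U_{k+1}}$ have been built. Fix a slightly smaller open neighborhood $U_{k+1}(\delta)$ of $X_{k+1}$ with compact closure inside $U_{k+1}$ (the analogue of Lemma \ref{lemma_collar_4} gives compactness of $\overline{U_{k+1}(\delta)}$ via the already-constructed $G_I$'s and condition (2)). For each $J$ with $|J|=k$, invoke Lemma \ref{lemma_embed_normal} to get a candidate $\varphi_J$, then use a partition of unity to interpolate between its normal frame and the frame inherited from $G_J|_{U_{k+1}}$, and similarly interpolate the connection, so that the resulting modified $\varphi_J$ (now defined via the exponential map of the merged connection) coincides with $G_J|_{U_{k+1}}$ on $U_{k+1}(\delta)$. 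Shrink $\epsilon_0$ further so that $\mathrm{Im}\,\varphi_{J_1}\cap\mathrm{Im}\,\varphi_{J_2}\subseteq U_{k+1}(\delta)$ whenever $J_1\neq J_2$ with $|J_1|=|J_2|=k$; this is possible because $F_{J_1}^{\circ}\cap F_{J_2}^{\circ}=\emptyset$ and $F_{J_i}^{\circ}\setminus U_{k+1}(\delta)$ is compact. Then define $G_J=\varphi_J$ and, for $I\subsetneq J$, extend $G_I$ to $\mathrm{Im}\,\varphi_J$ by the same formula; the overlap compatibility is automatic because all overlaps lie inside $U_{k+1}(\delta)$, where the definitions already coincide with $G_I|_{U_{k+1}}$. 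Finally set $U_k=U_{k+1}\cup\bigcup_{|J|=k}\mathrm{Im}\,\varphi_J$ and verify (1) and (2) on $U_k$; the stratum condition is built into $\varphi_J$, and (2) for triples with $J\subsetneq I\subsetneq J'$ follows from the additive identity in the $V_{J'}$-coordinates.

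The main technical obstacle, exactly as in the proof of Theorem \ref{theorem_collar}, is the simultaneous matching across the induction: producing $\varphi_J$ that (a) is still an embedding defined on all of $F_J^{\circ}\times V_J(\epsilon_0)$, (b) agrees with the previously defined $G_J|_{U_{k+1}}$ on $U_{k+1}(\delta)$, and (c) has $\mathrm{Im}\,\varphi_{J_1}\cap\mathrm{Im}\,\varphi_{J_2}$ contained in $U_{k+1}(\delta)$ for distinct same-level $J_1,J_2$. All three are handled by a combination of partition-of-unity gluing of frames and connections together with the compactness of $L$ (and hence of $F_J^{\circ}\setminus U_{k+1}(\delta)$). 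In contrast to the setting of Theorem \ref{theorem_collar}, we do not need Lemma \ref{lemma_collar_1} or any pair-of-factors compatibility: the single-manifold statement makes the inductive step strictly easier, so once the inductive hypothesis is in place the construction of $G_I$ in each $\mathrm{Im}\,\varphi_J$ reduces to vector addition in the collar coordinates.
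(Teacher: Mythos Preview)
Your proposal is correct and follows essentially the same approach as the paper: run only the inner (downward) induction from the proof of Theorem \ref{theorem_collar}, dropping the outer induction and the arguments tied to (\ref{theorem_collar_2}) (Lemmas \ref{lemma_collar_1} and \ref{lemma_collar_3}), exactly as you describe. The one small point you omit is the paper's final remark that, since there are only finitely many $I$, the $\epsilon_I$ can be taken equal to a common $\epsilon>0$, and a rescaling then gives the stated domain $V_I(1)$; this is a trivial closing step.
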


The assumption of Proposition \ref{proposition_collar} is more
general than that of Theorem \ref{theorem_collar} in some sense.
However, this proof is actually even easier than that one because we
only deal with one manifold with faces. It only requires that
(\ref{theorem_collar_1}) is true in a more general setting. We don't
need any more the arguments related to (\ref{theorem_collar_2}) such
as Lemmas \ref{lemma_collar_1} and \ref{lemma_collar_3}. Instead of
a double induction, it suffices to repeat the inner induction in the
proof of Theorem \ref{theorem_collar}. Since there are only finitely
many set $I$, we can find $\epsilon > 0$ such that $\epsilon_{I} =
\epsilon$ for all $I$. By a scaling of parameter, we get $\epsilon =
1$, which finishes the proof.

\section*{Acknowledgements}
I wish to thank Professor Ralph Cohen who told me the importance of face
structures, which improved an earlier version of this paper. I wish
to thank Professor Octav Cornea who encouraged me to publish this paper.
I'm indebted to my Ph.D. advisor Professor John Klein for his direction,
his patient educating, and his continuous encouragement.


\end{document}